\numberwithin{equation}{section}
\newtheorem{theorem}{Theorem}[section]
\newtheorem{lemma}[theorem]{Lemma}
\newtheorem{proposition}[theorem]{Proposition}
\newtheorem{remark}[theorem]{Remark}
\def\eps{\varepsilon }
\def\beq{\begin{equation}}
\def\eeq{\end{equation}}
\def\bb1{{1\!\!1}}
\def\R{\mbox{Re }}
\def\pt{\partial}
\def\ep{\epsilon}
\def\eps{\varepsilon}
\def\triangle{\Delta}
\def\bega{\begin{aligned}}
\def\enda{\end{aligned}}
\def\R{\mathbb{R}^2}
\def\lw{\left}
\def\rw{\right}
\def\R{\mathbb{R}}
\def\wtd{\widetilde}
\def\la{\langle}
\def\ra{\rangle}
\def\bcase{\begin{cases}}
\def\ecase{\end{cases}}
\def\al{\alpha}
\begin{document}

\title{Derivative estimates for screened Vlasov-Poisson system around Penrose-stable equilibria} 

\author{
 Trinh T. Nguyen\footnotemark[1]
}

\maketitle

\renewcommand{\thefootnote}{\fnsymbol{footnote}}

\footnotetext[1]{Department of Mathematics, Penn State University, State College, PA 16803. Emails: 
txn5114@psu.edu.}

\begin{abstract}
In this paper, we establish derivative estimates for the Vlasov-Poisson system with screening interactions around Penrose-stable equilibria on the phase space $\R^d_x\times \R_v^d$, with dimension $d\ge 3$. In particular, we establish the optimal decay estimates for higher derivatives of the density of  the perturbed system, precisely like the free transport, up to a log correction in time. This extends the recent work  \cite{T-R-HK} by Han-Kwan, Nguyen and Rousset to higher derivatives of the density. The proof makes use of several key observations from \cite{T-R-HK} on the structure of the forcing term in the linear problem, with induction arguments to classify all the terms appearing in the derivative estimates.
\end{abstract}

\maketitle

\section{Introduction}
\subsection{The system}
In this paper, we consider the screened Vlasov-Poisson system on the phase space  $(x,v)\in\R^d\times \R^d$, with the dimension $d\ge 3$:
\beq\label{VP}
\pt_t f+v\cdot\nabla_x f+E\cdot\nabla_vf=0
\eeq 
Here $f=f(t,x,v)\ge 0$ is the probability distribution of charged particles in plasma,
$$\rho(t,x)=\int_{\R^d}f(t,x,v)dv$$ is the electric charge density, and $$E(t,x)=-\nabla_x(1-\triangle_x)^{-1}(\rho-1)$$ is electric field.
This model has been investigated in physical literatures \cite{Sanderson, Chavanis,book1} and also recent mathematical works \cite{Jacob, T-R-HK}. We refer the readers to \cite{global-VP1, global-VP2, global-VP3, global-VP4,global-VP5} for global existence and regularity results. The system \eqref{VP} has $$(f,\rho,E)=(\mu(v),1,0)$$ as a steady solution for any smooth and decaying function $\mu(v)\ge 0$ with the normalized condition $$\int_{\R^d} \mu(v)dv=1.$$ 
We assume that $\mu(v)$ satisfies the Penrose stability condition ($\widehat{\nabla_v\mu}$ denotes the Fourier transform):
\beq \label{Penrose}
\inf_{\Im\tau\le 0}\inf_{\xi\in \R^d}\lw|
1-\int_0^\infty e^{-i\tau s}\frac{1}{1+|\xi|^2}i\xi\cdot\widehat{\nabla_v\mu}(s\xi)ds
\rw|\ge \kappa\qquad \text{for}\quad \kappa>0,
\eeq 
which is classical in the study of Landau damping \cite{Landau1,Landau}, where the authors justify the asymptotic stability for homogeneous equilibria that satisfies \eqref{Penrose} on the phase space $\mathbb{T}^d\times \R^d$, under analytic and Gevrey perturbations for the Vlasov-Poisson system. The stability condition \eqref{Penrose} is also natural in studying the quasineutral limit of the Vlasov-Poisson equations \cite{HK1,HK2, HK3} and the long time estimates for the Vlasov-Maxwell equations \cite{R2}.
\subsection{Previous works}
In \cite{Jacob},  Bedrossian, Masmoudi and Mouhot justify the asymptotic stability of the equilibria when $\mu(v)$ satisfies the condition \eqref{Penrose}. The proof is inspired by \cite{Landau} for Landau damping on the confined phase space $\mathbb{T}^d\times \R^d$. Using the dispersive mechanism in Fourier space to control the plasma echo resonance, the authors in \cite{Jacob} prove that the Fourier mode of the density $\hat\rho(t,\xi)$ decays like $\frac{1}{(t|\xi|)^{N-\delta}}$ if the initial perturbation is in Sobolev space of high regularity order $N$ and some $\delta\in (0,N)$. The decay is far from being optimal, as the dispersion in the physical space was not taken into account.~\\\\
In the recent work \cite{T-R-HK}, Han-Kwan, Nguyen, and Rousset revisit the asymptotic stability of equilibria that satisfy the condition \eqref{Penrose}. They obtain the decay estimates for the perturbed electric charge density $\rho(t)$ as follows:
\[
\|\rho(t)\|_{L^1}+\la t\ra \|\nabla_x \rho(t)\|_{L^1}+\la t \ra^d \|\rho(t)\|_{L^\infty}+\la t\ra^{d+1}\|\nabla_x \rho(t)\|_{L^\infty}\lesssim \eps_0\log(1+t),\qquad \la t\ra=\sqrt{t^2+1}.
\]
Unlike \cite{Jacob}, which relies on the nonlinear energy estimates, the authors in \cite{T-R-HK} use direct dispersive mechanism in the physical space, which is like the free transport up to $\log(t)$. This is achieved by a pointwise dispersive estimate, directly on the resolvent kernel for the linearized system around non-zero stable equilibria $\mu(v)$. This is followed by solving the equations by characteristic methods, inspired from the classical work of Bardos and Degond \cite{Bardos-Degond}. At the same time, the authors in \cite{T-R-HK} are able to propagate $C^1$ norm for the initial data  and thus allow more general perturbations. We note that the dispersive mechanism of the free transport operator $\pt_t+v\cdot\nabla_x$ on $\R^d\times \R^d$ is also one of the key ingredients in the classical results of Bardos and Degond \cite{Bardos-Degond} in 1985, where they study the asymptotic stability of Vlasov-Poisson around vacuum ($\mu(v)=0$). ~\\\\
Regarding the stability of vacuum (when $\mu(v)=0$) for the unscreened Vlasov-Poisson systems, we refer the readers to the work \cite{Hwang} for the extension of Bardos-Degond results for optimal decays of higher derivatives. In \cite{Smulevici}, Smulevici obtains the spatial decay by the vector field methods. In \cite{wang2018decay}, Wang justifies the stability of vacuum for Vlasov-Poisson by Fourier methods. In \cite{Choi-2D}, Choi, Ha and Lee justify the same result for 2D screened Vlasov-Poisson.
\section{Main results}
\subsection{Main theorem}
 In this paper, we will give decay estimates for higher derivatives of $\rho(t)$, under the assumption that the initial perturbation $f_0$ is small in suitable Sobolev space and for $\mu(v)$ satisfying decaying assumption: Given any $m\in \mathbb{N}$ and $M>0$, there exists $C_{m,M}>0$ such that 
 \beq\label{bound}
|\nabla_v^m \mu(v)|\le C_{m,M}\la v\ra^{-M}\quad \text{for all}\quad v\in \R^d.
 \eeq
 The equation for the perturbed quantities around the equilibria of \eqref{VP} reads 
\beq\label{Per-VP}
\begin{cases}
&\pt_t f+v\cdot\nabla_x f+E\cdot\nabla_v \mu=-E\cdot\nabla_v f,\\
&E=-\nabla_x(1-\triangle_x)^{-1}\rho,\\
&\rho=\int_{\R^d}fdv.
\end{cases}
\eeq 
Our main theorem is as follows:
 \begin{theorem}\label{main-thm}
Let $N>1$ be an integer. Let $\mu(v)\ge 0$ be sufficiently smooth and satisfy the Penrose stability condition \eqref{Penrose}, the decaying bound \eqref{bound} and the normalized condition $\int_{\R^d}\mu(v) dv=1$.
There exists $\eps_0>0$ such that for all $f_0(x,v)$ satisfying the smallness assumption
\[
\max_{0\le k\le N}\lw(\|\nabla_{x,v}^k f_0\|_{L^1_{x,v}}+\|\nabla_{x,v}^k f_0\|_{L^1_xL^\infty_v}\rw)\le \eps_0,
\]
the solution $f(t,x,v)$ to the equations \eqref{Per-VP} with initial data $f|_{t=0}=f_0(x,v)$ satisfies 
 \[
\max_{0\le k\le N}\lw(\la t\ra^k \|\nabla_x^k\rho(t)\|_{L^1}+\la t\ra ^{k+d}\|\nabla_x^k\rho(t)\|_{L^\infty}\rw)\lesssim \eps_0 \log(1+t).
 \]
 where $\rho(t,x)=\int_{\R^d}f(t,x,v)dv$. Here the norm $\|\cdot\|_{L^p_xL^q_v}$ is defined by 
 \[
 \|g\|_{L^p_xL^q_v}=\lw(\int_{\R^d}\|g(x,v)\|_{L^q_v}^pdx\rw)^{1/p}.
 \]
\end{theorem}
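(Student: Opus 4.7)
My starting point would be the Duhamel/Volterra formulation already developed in \cite{T-R-HK}. Propagating \eqref{Per-VP} along the free-transport characteristics $X(s;t,x,v)=x-v(t-s)$ and then integrating in $v$ yields a Volterra equation of the schematic form
\[
\rho(t,x) \;=\; \rho_{\mathrm{free}}(t,x) \;+\; \int_0^t \lw(K_\mu(t-s,\cdot)\ast_x \rho(s,\cdot)\rw)(x)\,ds \;+\; \mathcal{N}[\rho,f](t,x),
\]
where $\rho_{\mathrm{free}}(t,x)=\int_{\R^d} f_0(x-vt,v)\,dv$, $K_\mu$ is the Bessel-weighted response kernel built from $\nabla_v\mu$, and $\mathcal{N}=-\int_0^t\!\int_{\R^d} E(s,x-v(t-s))\cdot\nabla_v f(s,x-v(t-s),v)\,dv\,ds$. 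Under the Penrose condition \eqref{Penrose}, $\mathrm{Id}-K_\mu\ast$ is invertible and \cite{T-R-HK} furnishes a pointwise-dispersive Green's kernel $\mathcal{G}_\mu$ for this operator. Hence estimating $\nabla_x^k\rho$ reduces, modulo convolution against $\mathcal{G}_\mu$ (which by \cite{T-R-HK} preserves the targeted rates), to estimating $\nabla_x^k\rho_{\mathrm{free}}$ and $\nabla_x^k\mathcal{N}$ in $L^1_x\cap L^\infty_x$.

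\textbf{Induction on $k$ and the free term.} I would induct on $k=0,\ldots,N$, taking $k=0,1$ from \cite{T-R-HK} as the base. Assuming the stated decays hold for every $\nabla_x^j\rho$ with $j<k$, the free part is handled by the iterated Bardos-Degond identity
\[
\int_{\R^d} \nabla_x^k f_0(x-vt,v)\,dv \;=\; t^{-k}\int_{\R^d} \nabla_v^k f_0(x-vt,v)\,dv,
\]
obtained by repeated integration by parts in $v$ using $\nabla_v[f_0(x-vt,v)]=-t(\nabla_1 f_0)(x-vt,v)+(\nabla_2 f_0)(x-vt,v)$, the boundary terms vanishing by \eqref{bound} applied at the initial profile. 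This yields $\|\nabla_x^k\rho_{\mathrm{free}}(t)\|_{L^1_x}\lesssim \la t\ra^{-k}\|\nabla_v^k f_0\|_{L^1_{x,v}}$ and, after the change of variables $w=x-vt$, $\|\nabla_x^k\rho_{\mathrm{free}}(t)\|_{L^\infty_x}\lesssim \la t\ra^{-k-d}\|\nabla_v^k f_0\|_{L^1_xL^\infty_v}$, matching the claimed rates without any logarithmic loss.

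\textbf{Classifying the nonlinear terms.} The heart of the argument is $\nabla_x^k\mathcal{N}$. I would distribute the $k$ derivatives via Leibniz across $E(s,x-v(t-s))$ and $\nabla_v f(s,x-v(t-s),v)$, integrate by parts in $v$ against every remaining $\nabla_v$ attached to $f$, and then use the characteristic identity $(\nabla_1 f)(s,x-v(t-s),v)=-(t-s)^{-1}\nabla_v[f(s,x-v(t-s),v)]+(t-s)^{-1}(\nabla_2 f)(s,x-v(t-s),v)$ to convert back into $\nabla_v$'s the $\nabla_x$'s that remain attached to the transported $f$. The outcome is a finite linear combination of prototype terms indexed by nonnegative integers $k_1,k_2,k_3$ satisfying $k_1+k_2+k_3\le k+1$ and $k_1\le k$, of the form
\[
T_{k_1,k_2,k_3}(t,x)=\int_0^t\!\!\int_{\R^d} (t-s)^{k_3}\,\nabla_x^{k_1}E(s,x-v(t-s))\cdot (\nabla_x^{k_2}\nabla_v^{k_3} f)(s,x-v(t-s),v)\,dv\,ds.
\]
Each such term is then controlled by applying the inductive hypothesis to $\nabla_x^{k_1-1}\rho(s)$ (which via the Bessel kernel bounds $\nabla_x^{k_1}E$), together with a direct dispersive estimate on the $v$-integral of the $f$-factor; whenever the latter is itself expanded by a secondary Duhamel, a $\rho$-contribution reappears and one re-enters the same classification. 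This is the ``induction arguments to classify all the terms'' announced in the abstract.

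\textbf{Main obstacle.} The principal difficulty is precisely this combinatorial bookkeeping: one must check that every prototype produced by the Leibniz/by-parts/characteristic cascade really does obey $k_1+k_2+k_3\le k+1$ with $k_1\le k$, so that the inductive hypothesis covers all needed quantities, and that the time weights $(t-s)^{k_3}$ are exactly absorbed by the decay budget of $\nabla_x^{k_1}E$ and of the $f$-factor. The remaining step is to show that the resulting $ds$-integrals $\int_0^t \la t-s\ra^{-\alpha}\la s\ra^{-\beta}\,ds$ produce $\la t\ra^{-k}\log(1+t)$ in $L^1_x$ and $\la t\ra^{-k-d}\log(1+t)$ in $L^\infty_x$, the logarithm arising only from borderline $\alpha=\beta$ configurations exactly as in \cite{T-R-HK}. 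Once these source bounds are in hand, convolution with the Penrose resolvent $\mathcal{G}_\mu$ preserves them, and a standard small-data bootstrap closes the induction step and hence the theorem.
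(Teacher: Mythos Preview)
Your overall architecture (Volterra formulation, resolvent kernel, bootstrap) matches the paper's, but your treatment of the nonlinear forcing term diverges in a way that leaves a real gap.

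The paper does \emph{not} propagate along the free flow and then face the term $\mathcal{N}=-\int_0^t\!\int E(s,x-v(t-s))\cdot\nabla_v f(s,x-v(t-s),v)\,dv\,ds$. Instead it integrates \eqref{Per-VP} along the \emph{nonlinear} characteristics $(X_{s,t},V_{s,t})$, obtaining
\[
f(t,x,v)=f_0\big(X_{0,t},V_{0,t}\big)-\int_0^t E\big(s,X_{s,t}\big)\cdot\nabla_v\mu\big(V_{s,t}\big)\,ds,
\]
so that after $v$-integration the forcing $S=\mathcal I+\mathcal R_L-\mathcal R_{NL}$ in \eqref{S} involves only $f_0$, $E$, $\mu$ and the characteristic corrections $Y_{s,t},W_{s,t}$, never $f(s,\cdot,\cdot)$ itself. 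Derivatives $\nabla_x^k S$ are then classified (Lemmas \ref{formI} and \ref{form}) into terms built out of $\nabla_{x,v}^\alpha f_0$, $\nabla_v^\beta Y_{s,t}$, $\nabla_v^\gamma W_{s,t}$ and $s^{t_i}\nabla_x^{u_i}E$, all of which are controlled either by the initial-data smallness or by the bootstrap on $\rho$ (via Theorem \ref{EYW-decay}).

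In your scheme, by contrast, the prototype $T_{k_1,k_2,k_3}$ contains the factor $(\nabla_x^{k_2}\nabla_v^{k_3} f)(s,x-v(t-s),v)$, and you never explain how to bound $\int_{\R^d}|\nabla_x^{k_2}\nabla_v^{k_3} f(s,\cdot,v)|\,dv$ in $L^1_x$ or $L^\infty_x$. Your inductive hypothesis is on $\nabla_x^j\rho$ only, which gives no direct control of phase-space derivatives of $f$ at intermediate times; and your ``secondary Duhamel'' along the free flow regenerates another $E\cdot\nabla_v f$ term and does not terminate. The integration-by-parts/characteristic identity you invoke converts $\nabla_v$'s into $\nabla_x$'s at the cost of $(t-s)^{\pm 1}$ factors but still leaves at least one derivative on $f(s)$; it cannot reduce you purely to $\rho$ and $f_0$. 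This is precisely the obstruction the paper sidesteps by choosing nonlinear characteristics from the outset, so that the quadratic forcing becomes the \emph{difference} $\mathcal R_L-\mathcal R_{NL}$ measured by $Y_{s,t},W_{s,t}$ rather than a raw $E\cdot\nabla_v f$. To make your route work you would need, at minimum, to enlarge the bootstrap to include weighted $L^1_{x,v}$ and $L^1_xL^\infty_v$ norms of $\nabla_{x,v}^k f(s)$; this is not addressed in the proposal.
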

\subsection{Motivation}
The improved decays for higher derivatives of $\rho(t)$ can be seen from the free transport equation 
\[
\pt_t f_{\text{free}}+v\cdot\nabla_x f_{\text{free}}=0,\qquad f|_{t=0}=f_0.
\]
The solution is given by
\[
f_{\text{free}}=f_0(x-tv,v),\qquad \rho_{\text{free}}(t,x)=\int_{\R^d}f_0(x-tv,v)dv.
\]
Making the change of variables $w=x-tv$, we obtain
\beq\label{free}
\rho_{\text{free}}(t,x)=\int_{\R^d}f_0\lw(w,\frac{x-w}{t}\rw)t^{-d}dw.
\eeq 
Hence 
\[
\|\rho_{\text{free}}(t)\|_{L^\infty}\le t^{-d}\|f_0\|_{L^1_xL^\infty_v}\quad\text{and}\quad \|\rho_{\text{free}}(t)\|_{L^1}\le \|f_0\|_{L^1_xL^1_v}.
\]
Thus for the free transport equations, $\rho_{\text{free}}(t)$ decays like $t^{-d}$ in $L^\infty$. This dispersive mechanism for the free transport (which can be seen as linearized Vlasov-Poisson around zero) is one of the key ingredients in the classical work \cite{Bardos-Degond} by Bardos and Degond. ~\\
Now we discuss the decay for one derivative of $\rho_{\text{free}}(t)$. Taking $\nabla_x$ on both sides of \eqref{free}, we get
\[
\nabla_x\rho_{\text{free}}(t)=t^{-d} t^{-1}\int_{\R^d}\nabla_v f_0\lw(w,\frac{x-w}{t}\rw)dw,
\]
and hence 
\[
\|\nabla_x\rho_{\text{free}}(t)\|_{L^\infty}\le t^{-(d+1)}\|\nabla_v f_0\|_{L^1_xL^\infty_v}\quad\text{and}\quad
\|\nabla_x\rho_{\text{free}}(t)\|_{L^1}\le t^{-1}\|\nabla_vf_0\|_{L^1_xL^1_v}.
\]
This implies that $\nabla_x\rho_{\text{free}}(t)$ decays like $t^{-d-1}$ in $L^\infty$ and $t^{-1}$ in $L^1$. This decaying mechanism is in fact extended to the Vlasov-Poisson system around vacuum by Hwang, Rendall and Velazquez \cite{Hwang}. In particular, the authors in \cite{Hwang} establish the improved decay estimates 
\beq\label{dev}
\|\nabla_x^k \rho(t)\|_{L^\infty}\lesssim (1+t)^{-d-k}\qquad \text{for any}\quad k\ge 0
\eeq 
for small initial data near vacuum. 

The natural question is whether the estimate \eqref{dev} still holds for solution to the screened Vlasov-Poisson under small perturbation around nonzero homogeneous equilibria $\mu(v)$ that satisfies Penrose condition \eqref{Penrose}. 
In this paper, we prove that this is essentially the case, namely $\nabla_x^k\rho(t)$ decays like $\nabla_x^k\rho_{\text{free}}(t)$, up to a log in time correction.
\subsection{Outline of the proof}
\textbf{The set up}:
Using the characteristics
\beq \label{char-eq}
\begin{cases} 
\frac{d}{ds}X_{s,t}(x,v)&=V_{s,t}(x,v),\quad X_{t,t}(x,v)=x,\\
\frac{d}{ds}V_{s,t}(x,v)&=E(s,X_{s,t}(x,v)),\quad V_{t,t}(x,v)=v,
\end{cases}\eeq
the solution $f(t,x,v)$ to the perturbation equation \eqref{Per-VP} can be written as 
\[
f(t,x,v)=f_0(X_{0,t}(x,v),V_{0,t}(x,v))-\int_0^t E(s,X_{s,t}(x,v))\cdot\nabla_v\mu(V_{s,t}(x,v))ds.
\]
Integrating both sides in $v\in\R^d$ and using the fact that $E=-\nabla_x(1-\triangle_x)^{-1}\rho$, we get
\beq\label{rho-eq}
\bega 
\rho(t,x)=\int_0^t \int_{\R^d}\nabla_x(1-\triangle_x)^{-1}\rho (s,x-(t-s)v)\cdot\nabla_v\mu (v)dvds+S(t,x)
\enda 
\eeq 
where $S(t,x)$ is given by
\beq\label{S-eq}
\bega
S(t,x)=&\int_{\R^d}f_0(X_{0,t}(x,v),V_{0,t}(x,v))dv\\
&+\int_0^t\int_{\R^d} E(s,x-(t-s)v)\cdot\nabla_v\mu(v)dvds-\int_0^t \int_{\R^d}E(s,X_{s,t}(x,v))\cdot\nabla_v\mu(V_{s,t}(x,v))dvds.
\enda 
\eeq 
Taking spacetime Fourier transform, one can express $\wtd \rho(\tau,\xi)$ as
\beq\label{spacetime}
\wtd \rho(\tau,\xi)=\frac{1}{1-\wtd K(\tau,\xi)}\wtd S(\tau,\xi),\qquad \text{where} \quad \wtd K(\tau,\xi)=\int_0^\infty e^{-i\tau t}\frac{i\xi}{1+|\xi|^2}\cdot\widehat{\nabla_v\mu}(t\xi)dt.
\eeq 
The Penrose stability condition \eqref{Penrose} is equivalent to 
\[
\inf_{\Im (\tau)\le 0}\inf_{\xi\in\R^d}|1-\wtd K(\tau,\xi)|\ge \kappa\quad\text{for some}\quad \kappa>0,
\] which is to avoid the singularity in \eqref{spacetime}. Following \cite{T-R-HK}, we can write in the original $(t,x)$ variables as follows:
\beq \label{rho-con}
\rho(t)=S(t)+\int_0^t G(t-s)\star_x S(s)ds
\eeq 
where 
\beq\label{G}
G(t,x)=\mathcal{F}^{-1}_{(\tau,\xi)\to (t,x)}\lw(\frac{\wtd{K}(\tau,\xi)}{1-\wtd{K}(\tau,\xi)}\rw).
\eeq
One of the main results in \cite{T-R-HK} was to derive pointwise estimates for the resolvent kernel $G$. ~\\
\textbf{Sketch of the proof of the main theorem}:~\\
Our first step is to derive higher derivative estimates for the Green kernel $G(t)$, by using Paley-Littlewood decomposition and localized frequency bounds of $G$. Making use of the decay bounds for $\nabla_x^k G(t)$, we are able to propagate the decay of $\nabla_x^k\rho(t)$ by the forcing term $S(t)$:
\beq\label{intro1}
\|\nabla_x^k \rho(t)\|_{L^1}\lesssim \la t\ra^{-k}\log(1+t)\|S\|_{Y_t^N}\qquad \text{and}\quad \|\nabla_x^k \rho(t)\|_{L^\infty}\lesssim \la t\ra^{-d-k}\log(1+t)\|S\|_{Y_t^N}.
\eeq 
where 

\[ 
\|S\|_{Y_t^N}=\max_{0\le k\le N}\sup_{0\le s\le t} \lw(\la s\ra^k\|\nabla_x^k S(s)\|_{L^1}+\la s\ra ^{d+k}\|\nabla_x^kS(s)\|_{L^\infty}\rw).
\]
Thus, it suffices to bound the derivatives of the forcing term $S(t,x)$, defined in \eqref{S-eq}. Inspired by \cite{T-R-HK, Bardos-Degond}, we show that the trajectories $(X_{s,t}(x,v),V_{s,t}(x,v))$ are closed to the characteristics of the free transport $(x-(t-s)v,v)$. To bound $\nabla_x^k S(t,x)$, we also need to use induction argument to decompose $\nabla_x^k S(t,x)$ into quantities involving $\nabla_x^k E,\nabla_x^k \rho$ and lower order derivative terms involving characteristic trajectories. 
\subsection{Organization of the paper}
The paper is organized as follows: In Section \ref{rho-decay}, we justify the estimate \eqref{intro1} for the density $\rho(t)$. In Section \ref{char-decay}, we prove the decay estimates for the characteristics. In Section \ref{sec4}, we bound the forcing term $S(t,x)$, by determining the forms of its derivatives (Lemma \ref{formI} and \ref{form}), and then justify the decay estimates for each of the terms (Proposition \ref{thm1} and Proposition \ref{deri}).
\subsection{Acknowledgement}
The author would like to thank Toan T. Nguyen for his many insightful discussions on the subject. The research was supported by the NSF under grant DMS-1764119. 
\subsection{Notations}
For any complex numbers $A,B$, we write $A\lesssim B$ or $A=O(B)$, to mean that there exists a universal constant $C_0>0$ such that $|A|\le C_0|B|$. ~\\
For a vector field $F(x)=(F_1(x),\cdots, F_q(x))\in\R^q$ with $x\in \R^m$, we denote $\nabla_x^k F$ to be the set of derivatives
\[
\{\pt^\al F_j:\quad 1\le j\le q,\quad |\al|= k\}.
\]
Moreover, for two vector functions $F,G$ defined on $x\in \R^m,y\in \R^n$, we denote $
(\nabla^u_x F)(\nabla^v_y G)
$
to be the set of all products $XY$, where $X\in \nabla_x^u F$ and $Y\in \nabla_y^v G$. ~\\For any two vectors $a=(a_i)_{1\le i\le p},b=(b_i)_{1\le i\le p}$, we denote $a\le b$ if $a_i\le b_i$ for all $1\le i\le d$.~\\
We also denote $\la t\ra =(1+t^2)^{1/2}$. It is obvious that $\la t\ra \lesssim 1+t\lesssim \la t\ra$ for all $t\ge 0$.~\\
For a function $f(x)$ with $x\in \R^d$, we denote $\hat f(\xi)$ to be the standard Fourier transform of $f$, given by the formula
\[
\hat f(\xi)=\int_{\R^d}f(x)e^{-ix\cdot\xi}dx.
\]
For a function $h(t,x)$ with $x\in \R^d$ and $t\ge 0$, the space-time Fourier transform of $h$ is denoted by $\wtd h(\tau,\xi)$, and is given by 
\[
\wtd h(\tau,\xi)=\int_0^\infty\int_{\R^d}h(t,x)e^{-i\tau t}e^{-ix\cdot\xi}dxdt.
\]
We also use the Paley-Littlewood decomposition on $\R^d$. Let $\chi\in [0,1]$ be a smooth compactly supported function on the annulus $\frac 1 4\le |\xi|\le 4$ and equal to one on the annulus $\frac 1 2\le |\xi|\le 2$. Define \[
\chi_q(\xi)=\chi\lw(\frac{\xi}{2^q}\rw)\qquad \text{for}\quad q\in\mathbb{Z}
\]
For $u\in \mathcal{S}'(\R^d)$, we have the Paley-Littlewood decomposition
\[
u=\sum_{q\in \mathbb{Z}}u_q,\qquad \text{where}\quad \hat u_q(\xi)=\hat u(\xi)\chi_q(\xi).
\]

\section{Linear estimates}\label{rho-decay}
\subsection{Dispersive estimates for the Green kernel}
Now let $G$ be the kernel defined as in \eqref{G}. Using Paley-Littlewood decomposition, we can decompose $G$ as  
\[
G=\sum_{q\in \mathbb{Z}}G_q.
\]
Now, we recall the following decaying bounds on the localized-in-frequency Green kernel $G_q$ in \cite{T-R-HK}.
\begin{lemma}\label{Toan} Let $\mu(v)$ be smooth, satisfy the bound \eqref{bound} and the Penrose stability condition \eqref{Penrose}. For any $K>0$, there exists $A\ge 1$ such that for every $\delta \in (0,1]$, and for every $q\in \mathbb{Z}$ with $2^q\ge A$:
\[
\|G_q(t)\|_{L^1}\lesssim  \frac {2^{q(1+\delta)}}{1+2^{2q}}\cdot \frac{1}{(1+2^q t)^K}\quad
\text{and}\quad \|G_q(t)\|_{L^\infty}\lesssim \frac{2^{q(d+1+\delta)}}{1+2^{2q}}\cdot\frac{1}{(1+2^q t)^K}.
\]
Moreover, for $2^q\le A$, one has 
\[
\|G_q(t)\|_{L^1}\lesssim  \frac{2^q}{(1+2^q t)^K}\quad \text{and}\quad
\|G_q(t)\|_{L^\infty}\lesssim \frac{2^{q(d+1)}}{(1+2^qt)^K}.
\]
\end{lemma}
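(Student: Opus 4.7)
My plan is to reduce the pointwise bounds on $G_q$ to symbol-level bounds on $\widetilde G(\tau,\xi)=\widetilde K(\tau,\xi)/(1-\widetilde K(\tau,\xi))$ on the Paley--Littlewood rings $|\xi|\sim 2^q$, and then to transfer them to the physical side via Bernstein-type estimates. First I would note that $\widetilde K(\cdot,\xi)$ is a one-sided Laplace transform in $t$ and hence analytic in $\{\Im\tau<0\}$; combined with \eqref{Penrose}, the full symbol $\widetilde G$ is analytic in $\{\Im\tau\le 0\}$, so by Paley--Wiener $G$ is causal, and for $t>0$
\[
\hat G(t,\xi)=\frac{1}{2\pi}\int_{\mathbb R} e^{i\tau t}\,\widetilde G(\tau,\xi)\,d\tau.
\]

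\textbf{Symbol bounds.} Writing $\widetilde K=\tfrac{i\xi}{1+|\xi|^2}\,I(\tau,\xi)$ with $I(\tau,\xi)=\int_0^\infty e^{-i\tau t}\widehat{\nabla_v\mu}(t\xi)\,dt$ and rescaling $s=t|\xi|$, the Schwartz character of $\widehat{\nabla_v\mu}$ supplied by \eqref{bound} yields, for every $m,|\alpha|,N\ge 0$ and $\xi\ne 0$,
\[
|\partial_\tau^m\partial_\xi^\alpha I(\tau,\xi)|\lesssim |\xi|^{-1-m-|\alpha|}\,\langle\tau/|\xi|\rangle^{-N},
\]
hence the corresponding bound on $\partial_\tau^m\partial_\xi^\alpha\widetilde K$ with the extra scalar prefactor $|\xi|/(1+|\xi|^2)$. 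Combining with the uniform Penrose lower bound $|1-\widetilde K|\ge\kappa$ and the Fa\`a di Bruno formula transfers these bounds to the ratio $\widetilde G$.

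\textbf{Time decay and transfer to $L^1_x$, $L^\infty_x$.} To extract the $(1+2^q t)^{-K}$ factor I would integrate by parts $K$ times in $\tau$ in the inverse formula for $\hat G(t,\xi)$ after localizing by $\chi_q$. Each integration costs a factor $\sim 2^{-q}$ from $\partial_\tau$ on the symbol and yields a gain $1/t$; integrating the remaining weight $\langle\tau/|\xi|\rangle^{-N}$ contributes a factor $\sim |\xi|\sim 2^q$. Together with the analogous bounds on $\xi$-derivatives this gives
\[
|\partial_\xi^\alpha \hat G_q(t,\xi)|\lesssim \frac{2^{q(1-|\alpha|)}}{1+2^{2q}}\,(1+2^q t)^{-K},\qquad |\xi|\sim 2^q.
\]
The $L^\infty_x$ bound follows from $\|G_q\|_{L^\infty}\le\|\hat G_q\|_{L^1_\xi}\lesssim 2^{qd}\|\hat G_q\|_{L^\infty_\xi}$. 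For the $L^1_x$ bound I would split $\|G_q\|_{L^1_x}\lesssim R^d\|G_q\|_{L^\infty_x}+R^{d-N}\||x|^N G_q\|_{L^\infty_x}$ with $N>d$ and $R=2^{-q}$, writing $|x|^N G_q=\mathcal F^{-1}(\partial_\xi^N\hat G_q)$ and feeding in the derivative bounds above. The small $2^{q\delta}$ loss at high frequency comes from the non-integer smoothness one must use in this splitting (taking $N=d+\delta$ rather than an integer) combined with the uniform control of $\hat G_q$ in a Sobolev space of order just above $d$; at low frequency $2^q\le A$ the prefactor $(1+|\xi|^2)^{-1}$ is $O(1)$, no such loss occurs, and one obtains the clean bounds $2^q$ and $2^{q(d+1)}$.

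\textbf{Main obstacle.} The hardest step is justifying the integration by parts in $\tau$ rigorously: one shifts the contour slightly into $\{\Im\tau<0\}$, where \eqref{Penrose} provides uniform invertibility of $1-\widetilde K$, and verifies that boundary terms at $|\Re\tau|\to\infty$ vanish. This requires sharp decay of $\widetilde K$ and its derivatives in $\Re\tau$, which is precisely what the Schwartz property of $\widehat{\nabla_v\mu}$ provides through the rescaling argument. Propagating the Penrose lower bound through the Fa\`a di Bruno expansion for arbitrary mixed derivatives of $\widetilde G$, while preserving the correct $|\xi|$-scalings, is the main bookkeeping burden and where the proof is most delicate.
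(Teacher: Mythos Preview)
The paper does not prove this lemma; it is quoted from \cite{T-R-HK}, so there is no in-paper argument to compare against. Your overall strategy---symbol bounds on $\widetilde G=\widetilde K/(1-\widetilde K)$, integration by parts in $\tau$ for the $t$-decay, and a Bernstein-type splitting for the $L^1_x$ and $L^\infty_x$ bounds---is indeed the approach of \cite{T-R-HK}, and the outline is broadly correct.

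There is, however, a concrete error in your symbol estimate. The claim
\[
|\partial_\tau^m\partial_\xi^\alpha I(\tau,\xi)|\ \lesssim\ |\xi|^{-1-m-|\alpha|}\,\langle\tau/|\xi|\rangle^{-N}\qquad\text{for every }N\ge 0
\]
is false. Since $I$ is a \emph{one-sided} Laplace transform, its decay in $\tau$ is governed by the behavior of the integrand at $t=0$, not by the Schwartz decay of $\widehat{\nabla_v\mu}$ at infinity. Writing $\widehat{\nabla_v\mu}(\eta)=i\eta\,\hat\mu(\eta)$ and rescaling $s=t|\xi|$, the integrand $is\omega\,\hat\mu(s\omega)$ vanishes to first order at $s=0$ but its $s$-derivative is $i\omega\hat\mu(0)=i\omega\neq 0$; integration by parts in $s$ therefore gives only $|I|\lesssim|\xi|^{-1}\langle\tau/|\xi|\rangle^{-2}$, and no faster. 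The correct statement is that the admissible exponent $N$ depends on $m$ (each $\partial_\tau$ brings down a factor of $t$, raising the vanishing order at $t=0$ by one). This is not fatal to your scheme: after $K$ integrations by parts in $\tau$ you only need $\partial_\tau^K\widetilde G$ integrable in $\tau$, and $N=K+2$ already suffices. But the bound as written is wrong and should be corrected.

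Your account of the $2^{q\delta}$ loss is also unconvincing. If the integer-order bound $|\partial_\xi^\alpha\hat G_q|\lesssim\frac{2^{q(1-|\alpha|)}}{1+2^{2q}}(1+2^qt)^{-K}$ really held for all $|\alpha|\le d+1$, your splitting with integer $N=d+1$ and $R=2^{-q}$ would already yield $\|G_q\|_{L^1}\lesssim\frac{2^q}{1+2^{2q}}(1+2^qt)^{-K}$ with no $\delta$ at all. So either that derivative bound fails at high frequency---which is where the $\delta$ actually enters in \cite{T-R-HK}---or your bookkeeping is off; in either case the sketch does not locate the true source of the loss.
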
~\\
Making use of the decay bounds on the Green kernels $\{G_q\}_{q\in \mathbb{Z}}$, we establish the following bound on the derivatives of $G(t)$:
\begin{theorem} For any integer $k\ge 0$, there holds
\[
\|\nabla_x^k G(t)\|_{L^1}\lesssim t^{-k-1}\qquad \text{and}\quad \|\nabla_x^k G(t)\|_{L^\infty}\lesssim t^{-d-1-k}
\]
for $t>0$, where $G$ is the kernel defined in \eqref{G}.
\end{theorem}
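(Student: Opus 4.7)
\medskip

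The plan is to use the Paley--Littlewood decomposition $\nabla_x^k G(t)=\sum_{q\in\mathbb Z}\nabla_x^k G_q(t)$ and bound each dyadic piece by combining Lemma \ref{Toan} with the Bernstein-type inequality
\[
\|\nabla_x^k G_q(t)\|_{L^p}\lesssim 2^{qk}\|G_q(t)\|_{L^p},\qquad p\in\{1,\infty\},
\]
which is immediate from the fact that $\widehat{G_q}$ is supported in the annulus $\{|\xi|\sim 2^q\}$. I will then sum in $q$, splitting the sum according to whether $2^q\le A$ or $2^q\ge A$ (the two regimes of Lemma \ref{Toan}) and, within each regime, whether $2^q t\le 1$ or $2^q t\ge 1$.

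For the $L^1$ estimate, the low-frequency contribution is controlled by
\[
\sum_{2^q\le A}2^{q(k+1)}(1+2^q t)^{-K},
\]
which I bound by taking $K$ large (specifically $K>k+1$ suffices): the $2^q\le 1/t$ part contributes $\lesssim t^{-(k+1)}$ by summing a geometric series dominated by its largest term $2^q\sim 1/t$, while the $1/t\le 2^q\le A$ part gives $2^{q(k+1-K)}t^{-K}$ whose geometric sum is dominated by $2^q\sim 1/t$ and again yields $t^{-(k+1)}$. The high-frequency contribution, using $1+2^{2q}\sim 2^{2q}$ for $2^q\ge A$, becomes
\[
\sum_{2^q\ge A}2^{q(k-1+\delta)}(1+2^q t)^{-K},
\]
and for $K$ chosen larger than both $k+1$ and $k-1+\delta$ the sum is geometric and dominated by the lowest dyadic scale $2^q\sim A\sim 1$, giving a contribution $\lesssim t^{-K}\lesssim t^{-(k+1)}$ for $t\ge 1$. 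The case $0<t\le 1$ needs no decay and is absorbed into the implicit constant.

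The $L^\infty$ estimate follows by the same scheme with the $L^\infty$ bounds of Lemma \ref{Toan}: the dyadic terms are
\[
\|\nabla_x^k G_q\|_{L^\infty}\lesssim 2^{q(k+d+1)}(1+2^q t)^{-K}\quad(2^q\le A),\qquad
\|\nabla_x^k G_q\|_{L^\infty}\lesssim 2^{q(k+d-1+\delta)}(1+2^q t)^{-K}\quad(2^q\ge A).
\]
Taking $K$ larger than $k+d+1$ and repeating the split of the summation, the low frequencies yield $t^{-(k+d+1)}$ (dominated by $2^q\sim 1/t$) and the high frequencies yield $t^{-K}\lesssim t^{-(k+d+1)}$ (dominated by $2^q\sim A$), as required.

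The argument is essentially a bookkeeping exercise: there is no conceptual obstacle, only the need to choose the exponents $K$ and $\delta$ in Lemma \ref{Toan} large enough/small enough to make every geometric series convergent with the correct scale-dominating term. The only mildly delicate point is that for high frequencies the decay rate in $q$ for fixed $t$ is controlled by $K$, so $K$ must be taken to depend on $k$ (and on $d$ in the $L^\infty$ case); since $K$ is arbitrary in Lemma \ref{Toan}, this is permitted.
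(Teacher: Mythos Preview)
Your proof is correct and follows essentially the same Littlewood--Paley plus Bernstein argument as the paper, with the same dyadic splitting at $2^q\sim 1/t$ and $2^q\sim A$ and the same choice of $K$ large depending on $k$. One small correction: for $k\ge 1$ and $0<t\le 1$ the high-frequency sum $\sum_{2^q\ge A}2^{q(k-1+\delta)}(1+2^qt)^{-K}$ is \emph{not} uniformly bounded (so not literally ``absorbed into the implicit constant''), but the same split at $2^q\sim 1/t$ gives the bound $\lesssim t^{-(k-1+\delta)}\lesssim t^{-(k+1)}$, which is what the theorem requires.
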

\begin{proof} We take $K$ so that  $K>k+d+1$. We bound $\|\nabla_x^k G(t)\|_{L^1}$ as follows:
\[\bega 
\|\nabla_x^k G(t)\|_{L^1}&\lesssim \sum_{2^q\le A}2^{kq}\|G_q(t)\|_{L^1}+\sum_{2^q\ge A}2^{kq}\|G_q(t)\|_{L^1}\\
&\lesssim \sum_{2^q\le A} \frac{2^{q(k+1)}}{(1+2^q t)^K}+\sum_{2^q\ge A} \frac{2^{q(k+d+1+\delta)}}{(1+2^{2q})(1+2^q t)^K}.\\
\enda\]
For the first term, we see that
\[
\bega 
 \sum_{2^q\le A} \frac{2^{q(k+1)}}{(1+2^q t)^K}&\lesssim \lw(\sum_{2^q\le t^{-1}}+\sum_{t^{-1}\le 2^q\le 1}+\sum_{1\le 2^q\le A}\rw) \frac{2^{q(k+1)}}{(1+2^q t)^K}\\
 &\lesssim \sum_{2^q\le t^{-1}}2^{q(k+1)}+t^{-K}\sum_{2^q\le 1}2^{q(k+1-K)}+t^{-K}\sum_{1\le 2^q\le A}2^{q(k+1-K)} \\
 & \lesssim t^{-(k+1)}+t^{-K}\lesssim t^{-(k+1)}.
  \enda 
\]
The second term is treated as follows:
\[
\bega
\sum_{2^q\ge A} \frac{2^{q(k+d+1+\delta)}}{(1+2^{2q})(1+2^q t)^K}&\lesssim t^{-K}\sum_{2^q\ge A} 2^{q(k+d+1+\delta-2-K)}\lesssim t^{-K}\lesssim t^{-k-1}\qquad \text{since}\quad K>k+1+d.
\enda 
\]
The decay bound for $\|\nabla_x^k G(t)\|_{L^1}$ is complete. Now we bound $\|\nabla_x^k G(t)\|_{L^\infty}$ as follows:
\[\bega
\|\nabla_x^k G(t)\|_{L^\infty}&\lesssim \sum_{2^q\le A} 2^{kq}\|G_q(t)\|_{L^\infty}+\sum_{2^q\ge A}2^{kq}\|G_q(t)\|_{L^\infty}\\
&\lesssim \sum_{2^q\le A}\frac{2^{q(d+1+k)}}{(1+2^q t)^K}+\sum_{2^q\ge A}\frac{2^{q(d+1+\delta+k)}}{1+2^{2q}}\cdot\frac{1}{(1+2^qt)^K}.\\\enda\]
For the first term, we see that 
\[\bega 
\sum_{2^q\le A}\frac{2^{q(d+1+k)}}{(1+2^q t)^K}&\lesssim  \lw(\sum_{2^q\le t^{-1}}+\sum_{t^{-1}\le 2^q\le 1}+\sum_{1\le 2^q\le A}\rw)\frac{2^{q(d+1+k)}}{(1+2^qt)^K}\\
& \lesssim \sum_{2^q\le t^{-1}} 2^{q(d+1+k)}+t^{-K}\sum_{t^{-1}\le 2^q\le 1}2^{q(d+1+k-K)}+t^{-K}\sum_{1\le 2^q\le A} 2^{q(d+1+k-K)}\\
&\lesssim t^{-(d+1+k)}+t^{-K}\lesssim t^{-(d+1+k)}.
\enda 
\]
For the second term, we see that 
\[\bega 
\sum_{2^q\ge A}\frac{2^{q(d+1+\delta+k)}}{1+2^{2q}}\cdot\frac{1}{(1+2^qt)^K}&\lesssim t^{-K}\sum_{2^q\ge A}2^{q(d+1+\delta+k-K-2)}\lesssim t^{-K}\lesssim t^{-(d+1+k)}.
\enda 
\]
The proof is complete.
\end{proof}
\subsection{Decay estimates for the density and electric field of the linearized problem}
In this section, we drive decay estimates for higher derivatives of the density $\rho(t)$, defined in \eqref{rho-con} and and the electric field $E=-\nabla_x(1-\nabla_x)^{-1}\rho$. For $N>0$, we define
\beq 
\|S\|_{Y_t^N}=\max_{0\le k\le N}\sup_{0\le s\le t} \lw(\la s\ra^k\|\nabla_x^k S(s)\|_{L^1}+\la s\ra ^{d+k}\|\nabla_x^kS(s)\|_{L^\infty}\rw).
\eeq 
By definition, we see that 
\[
\|\nabla_x^k S(s)\|_{L^1}\le \la s\ra^{-k} \|S\|_{Y_t^N}\qquad \text{and}\quad \|\nabla_x^k S(s)\|_{L^\infty}\le \la s\ra^{-(k+d)}\|S\|_{Y_t^N}
\]
for $0\le k\le N$ and $0\le s\le t$.
\begin{theorem} \label{decay-rho} Let $N\ge 1$ be an integer. The density $\rho(t)$ defined in \eqref{rho-con} satisfies the bound
\beq\label{decayrho}
\max_{0\le k\le N}\lw\{\la t\ra ^k\|\nabla_x^k \rho(t)\|_{L^1}+\la t\ra^{d+k}\|\nabla_x^k \rho(t)\|_{L^\infty}\rw\}\lesssim \log(1+t)  \|S\|_{Y_t^N}\qquad \text{for}\quad t>0.
\eeq
\end{theorem}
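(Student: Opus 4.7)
The plan is to start from the resolvent representation
\[
\rho(t)=S(t)+\int_0^t G(t-s)\star_x S(s)\,ds
\]
of \eqref{rho-con} and to apply $\nabla_x^k$, distributing the derivatives between the two convolution factors according to the identity
\[
\nabla_x^k\bigl(G(t-s)\star_x S(s)\bigr)=\bigl(\nabla_x^k G(t-s)\bigr)\star_x S(s)=G(t-s)\star_x\bigl(\nabla_x^k S(s)\bigr).
\]
The guiding principle is to let $G$ carry the derivatives (thereby picking up additional spatial decay from the kernel theorem) when $t-s$ is of order $t$, and to let $S$ carry them (picking up $\la s\ra^{-k}$ from the $Y_t^N$ norm) when $s$ is close to $t$. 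Combined with Young's convolution inequality and the definition of $\|S\|_{Y_t^N}$, this should close both estimates.

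Concretely, I would split the Duhamel integral at $s=t/2$. For the $L^1$ bound, on $[0,t/2]$ the convolution inequality together with $\|\nabla_x^k G(t-s)\|_{L^1}\lesssim (t-s)^{-(k+1)}\lesssim t^{-(k+1)}$ and $\|S(s)\|_{L^1}\le\|S\|_{Y_t^N}$ yields a contribution of order $t^{-k}\|S\|_{Y_t^N}$. On $[t/2,t]$, pairing $\|G(t-s)\|_{L^1}$ with $\|\nabla_x^k S(s)\|_{L^1}\le\la s\ra^{-k}\|S\|_{Y_t^N}\lesssim\la t\ra^{-k}\|S\|_{Y_t^N}$ reduces matters, after the change of variable $r=t-s$, to $\la t\ra^{-k}\|S\|_{Y_t^N}\int_0^{t/2}\|G(r)\|_{L^1}\,dr$. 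The latter integral is $\lesssim\log(1+t)$, which is exactly the origin of the logarithmic correction in the statement. The $L^\infty$ bound is entirely parallel: on $[0,t/2]$ use $\|\nabla_x^k G(t-s)\|_{L^\infty}\lesssim (t-s)^{-(d+k+1)}$ paired with $\|S(s)\|_{L^1}$, and on $[t/2,t]$ use $\|G(t-s)\|_{L^1}$ paired with $\|\nabla_x^k S(s)\|_{L^\infty}\le\la s\ra^{-(k+d)}\|S\|_{Y_t^N}$. The boundary term $\|\nabla_x^k S(t)\|$ in either norm is absorbed directly by the definition of the $Y_t^N$ norm.

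The main technical subtlety is the integrability of $\|G(r)\|_{L^1}$ near $r=0$: the naive reading $\|G(r)\|_{L^1}\lesssim r^{-1}$ from the previous theorem would produce a divergence in the late-time piece $\int_0^{t/2}\|G(r)\|_{L^1}\,dr$. The repair is to revisit the Paley--Littlewood sum in the regime $r\le 1$, where $(1+2^q r)^{-K}$ is bounded below: the low-frequency sum $\sum_{2^q\le A}2^q$ is $\lesssim A$, and choosing $\delta<1$ makes $\sum_{2^q\ge A}2^{q(1+\delta)}/(1+2^{2q})\lesssim\sum_{2^q\ge A}2^{q(\delta-1)}$ convergent; this gives the sharpened uniform bound $\|G(r)\|_{L^1}\lesssim\la r\ra^{-1}$. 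Then $\int_0^{t/2}\la r\ra^{-1}\,dr\lesssim\log(1+t)$ and the bookkeeping closes. The very small time range $t\in(0,1]$ is in any case controlled trivially by $\|S\|_{Y_t^N}$ once $\|G\|_{L^1}$ is seen to be bounded near the origin.
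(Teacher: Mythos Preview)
Your approach is essentially identical to the paper's: split the Duhamel integral at $t/2$, place all $k$ derivatives on $G$ for $s\in[0,t/2]$ and on $S$ for $s\in[t/2,t]$, and apply Young's inequality exactly as you describe. In fact you are slightly more careful than the paper, which silently replaces $\|G(t-s)\|_{L^1}\lesssim (t-s)^{-1}$ by $\frac{1}{1+t-s}$ in the late-time piece; your explicit verification that $\|G(r)\|_{L^1}\lesssim \la r\ra^{-1}$ via the Paley--Littlewood sum fills that gap.
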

\begin{proof}  Fix $k$ so that $0\le k\le N$. First we estimate $\nabla_x^k\rho$ in $L^1$. Applying $\nabla_x^k$ to both sides of \eqref{rho-con}, we get
\[
\nabla_x^k \rho(t)=\nabla_x^k S(t)+\int_0^{t/2} \nabla_x^k G(t-s)\star_x S(s)ds+\int_{t/2}^t G(t-s)\star_x \nabla_x^k S(s)ds.
\]
This implies 
\[\bega 
\|\nabla_x^k \rho(t)\|_{L^1}&\le \|\nabla_x^k S(t)\|_{L^1}+\int_0^{t/2}\|\nabla_x^k G(t-s)\|_{L^1}\|S(s)\|_{L^1}ds+\int_{t/2}^t \|G(t-s)\|_{L^1}\|\nabla_x^k S(s)\|_{L^1}ds\\
&\lesssim \la t\ra^{-k}\|S\|_{Y_t^N}+\|S\|_{Y_t^N}\lw(\int_0^{t/2}\frac{1}{(t-s)^{k+1}}ds+\int_{t/2}^t \frac{1}{(1+t-s)}\la s\ra^{-k}ds\rw)\\
&\lesssim \|S\|_{Y_t^N}\lw(\la t\ra^{-k}+t^{-k}+\frac{\log(1+t)}{(1+t)^k}\rw)
\enda 
\]
Hence 
\[
\la t\ra^k \|\nabla_x^k\rho(t)\|_{L^1}\lesssim \log(1+t)\|S\|_{Y_t^N}.
\]
Now we estimate the $L^\infty$ norm of $\nabla_x^k\rho$. We have 
\[\bega
\|\nabla_x^k\rho(t)\|_{L^\infty}&\le \|\nabla_x^k S(t)\|_{L^\infty}+\int_0^{t/2}\|\nabla_x^kG(t-s)\|_{L^\infty}\|S(s)\|_{L^1}ds+\int_{t/2}^t \|G(t-s)\|_{L^1}\|\nabla_x^k S(s)\|_{L^\infty}ds\\
&\lesssim \|S\|_{Y^N_t}\la t\ra^{-k-d}+\|S\|_{Y_t^N}\lw(\int_0^{t/2}\frac{1}{(t-s)^{k+d+1}}ds+\int_{t/2}^t \frac{1}{(1+t-s)}\la s\ra^{-k-d}ds\rw)\\
&\lesssim \frac{\log(1+t)}{(1+t)^{d+k}}\|S\|_{Y_t^N},
\enda 
\]
and hence 
\[
\la t\ra^{d+k}\|\nabla_x^k \rho(t)\|_{L^\infty}\lesssim \log(1+t)\|S\|_{Y_t^N}.
\]
The proof is complete.\end{proof}
\begin{theorem}
Assume that 
\[
\max_{0\le k\le N}\lw\{ \la t\ra^{d+k}\|\nabla_x^k\rho(t)\|_{L^\infty}+\la t\ra^k \|\nabla_x^k \rho(t)\|_{L^1}
\rw\}\lesssim \eps \log(1+t),
\]
there holds 
\[
\max_{0\le k\le N}\lw\{ \la t\ra^{d+k}\|\nabla_x^kE(t)\|_{L^\infty}+\la t\ra^k\|\nabla_x^kE(t)\|_{L^1}\rw\}\lesssim \eps\log(1+t).
\]
\end{theorem}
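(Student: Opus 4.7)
The plan is to write $E$ as a convolution with an $L^1$ kernel, so that Young's inequality transfers each of the decay estimates for $\rho$ to $E$ with only the loss of a harmless multiplicative constant. Let $H$ denote the kernel of the Fourier multiplier $-\nabla_x(1-\triangle_x)^{-1}$, so that $\widehat{H}(\xi) = -i\xi/(1+|\xi|^2)$ and $E(t) = H\star_x \rho(t)$. Since the multiplier commutes with derivatives, for every integer $k\ge 0$ one has
\[
\nabla_x^k E(t) = H\star_x \nabla_x^k \rho(t).
\]

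The key step is to verify that $H \in L^1(\R^d)$ when $d\ge 3$. Writing $H = -\nabla_x \CalB$ with $\CalB = \mathcal{F}^{-1}\bigl[(1+|\xi|^2)^{-1}\bigr]$ the standard Bessel potential, classical estimates give $\CalB(x)\sim c_d|x|^{-(d-2)}$ as $|x|\to 0$ and $\CalB(x)$ decays exponentially as $|x|\to\infty$ (up to algebraic factors). Hence $|H(x)|\lesssim |x|^{-(d-1)}$ near the origin and $|H(x)|\lesssim e^{-|x|/2}$ at infinity, so that in radial coordinates
\[
\int_{\R^d}|H(x)|\,dx \;\lesssim\; \int_0^1 r^{-(d-1)}\,r^{d-1}\,dr + \int_1^\infty e^{-r/2}\,r^{d-1}\,dr \;<\;\infty.
\]
The assumption $d\ge 3$ is precisely what makes the singularity $|x|^{-(d-2)}$ of $\CalB$ locally integrable against $r^{d-1}\,dr$ after taking one derivative.

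With $\|H\|_{L^1}<\infty$ in hand, Young's convolution inequality gives
\[
\|\nabla_x^k E(t)\|_{L^p}\;\le\; \|H\|_{L^1}\,\|\nabla_x^k \rho(t)\|_{L^p},\qquad p\in\{1,\infty\},
\]
for every $0\le k\le N$. Substituting the hypothesized bounds on $\rho$ yields $\la t\ra^k\|\nabla_x^kE(t)\|_{L^1}\lesssim \eps\log(1+t)$ and $\la t\ra^{d+k}\|\nabla_x^kE(t)\|_{L^\infty}\lesssim \eps\log(1+t)$, which is the claim. There is essentially no obstacle: the whole proof rests on the single observation that screening (as opposed to Coulomb interaction) produces an operator with an $L^1$ kernel, so that $L^1$ and $L^\infty$ bounds on $\rho$ transfer to $E$ without loss of decay. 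The only care required is the kernel estimate at the origin, and this is exactly where the hypothesis $d\ge 3$ enters.
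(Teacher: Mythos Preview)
Your argument is correct and is essentially the same as the paper's: the paper simply invokes the ``standard elliptic estimate'' $\|\nabla_x^k E\|_{L^p}\lesssim\|\nabla_x^k\rho\|_{L^p}$ for $p\in\{1,\infty\}$, which is precisely your Young's inequality with the $L^1$ kernel $H$. Your version has the merit of spelling out why screening makes this work (the Bessel-type kernel is integrable, unlike the Coulomb case). One small remark: your claim that $d\ge 3$ is ``precisely'' where integrability comes from is a slight overstatement --- the kernel $H$ is actually in $L^1(\R^d)$ for every $d\ge 1$ (for $d=2$ the Bessel potential has only a logarithmic singularity, for $d=1$ none at all); the condition $d\ge 3$ is used elsewhere in the paper, not here.
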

\begin{proof}
Since $E=-\nabla_x(1-\triangle_x)^{-1}\rho$, we have 
\[
\nabla_x^k E=-\nabla_x(1-\triangle_x)^{-1}(\nabla_x^k \rho)
\]
By the standard elliptic estimate, we get 
\[\begin{cases}
\|\nabla_x^k E(t)\|_{L^\infty}&\lesssim \|\nabla_x^k\rho(t)\|_{L^\infty}\lesssim \eps\la t\ra^{-d-k}\log(1+t),\\
\|\nabla_x^kE(t)\|_{L^1}&\lesssim \|\nabla_x^k\rho(t)\|_{L^1}\lesssim \eps\la t\ra^{-k}\log(1+t).
\end{cases}
\]
The proof is complete.
\end{proof}
\section{Decay estimates for the characteristics}\label{char-decay}
\subsection{Main theorem}
From the equations \eqref{char-eq}, the characteristics $X_{s,t}(x,v)$ and $V_{s,t}(x,v)$ can be written as follows:
\[
\begin{cases}
X_{s,t}(x,v)&=x-(t-s)v+\int_s^t (\tau-s)E(\tau,X_{\tau,t}(x,v))d\tau,\\
V_{s,t}(x,v)&=v-\int_s^t E(\tau,X_{\tau,t}(x,v))d\tau.
\end{cases}
\]
Following \cite{T-R-HK}, we define $(Y_{s,t}(x,v), W_{s,t}(x,v))$ so that 
\beq \label{rep-char}
\begin{cases}
X_{s,t}(x,v)=x-(t-s)v+Y_{s,t}(x-tv,v),\\
V_{s,t}(x,v)=v+W_{s,t}(x-tv,v).
\end{cases}
\eeq
Hence, we get 
\beq \label{Y-V}
\begin{cases}
Y_{s,t}(x,v)&=\int_s^t(\tau-s)E(\tau,x+\tau v+Y_{\tau,t}(x,v))d\tau,\\
W_{s,t}(x,v)&=-\int_s^t E(\tau,x+\tau v+Y_{\tau,t}(x,v))d\tau.
\end{cases}
\eeq 
Our main theorem in this section is as follows:
\begin{theorem}\label{EYW-decay} Assume that
\beq \label{E-decay}
\max_{0\le k\le N}\lw(\la t\ra^{d+k}\|\nabla_x^k E(t)\|_{L^\infty}\rw)\lesssim\eps \log(1+t)\qquad \text{for all}\quad t>0,
\eeq 
there holds, for all $s\in [0,t]$, the following inequalities:
\beq \label{Y-decay}
\max_{0\le k\le N}\|\nabla_v^k Y_{s,t}\|_{L^\infty}\lesssim \eps \frac{\log(1+s)}{(1+s)^{d-2}}
\eeq 
and 
\beq\label{W-decay}
\max_{0\le k\le N}\|\nabla_v^k W_{s,t}\|_{L^\infty}\lesssim \eps \frac{\log(1+s)}{(1+s)^{d-1}}.
\eeq  
\end{theorem}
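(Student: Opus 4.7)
The plan is to proceed by induction on $k$ from $0$ to $N$. For the base case $k=0$, I substitute the $L^\infty$-decay \eqref{E-decay} directly into the integral identities \eqref{Y-V} to obtain
\[
\|Y_{s,t}\|_{L^\infty}\lesssim \eps \int_s^t (\tau-s)\frac{\log(1+\tau)}{(1+\tau)^d}\, d\tau\lesssim \eps\frac{\log(1+s)}{(1+s)^{d-2}},
\]
where the second inequality is a standard computation valid for $d\ge 3$ (split the $\tau$-integration region into $\tau-s\lesssim 1+s$ and $\tau-s\gtrsim 1+s$, and use $\log(1+\tau)\lesssim \log(1+s)+\log(1+(\tau-s)/(1+s))$). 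The corresponding bound on $\|W_{s,t}\|_{L^\infty}$ is analogous, losing the factor $(\tau-s)$ and producing the sharper rate $(1+s)^{-(d-1)}$.

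For the inductive step, assume \eqref{Y-decay}--\eqref{W-decay} hold for all orders $\le k-1$. Differentiating \eqref{Y-V} $k$ times in $v$ and expanding via Fa\`a di Bruno, I decompose $\nabla_v^k Y_{s,t}$ into a finite sum of integrals $\int_s^t (\tau-s) T_\alpha(\tau)\, d\tau$, where each $T_\alpha$ is a product of (i) a spatial derivative $\nabla_x^m E(\tau, x+\tau v + Y_{\tau,t}(x,v))$ with $1\le m\le k$, (ii) tensor factors of the form $\tau I + \nabla_v Y_{\tau,t}$ that arise from single chain-rule differentiations through the $\tau v$-argument, and (iii) higher derivatives $\nabla_v^{j_i} Y_{\tau,t}$ with $2\le j_i\le k$, subject to the partition constraint that the total number of $v$-differentiations equals $k$. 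The single term in which all $k$ derivatives fall on the same $Y_{\tau,t}$, namely $\int_s^t(\tau-s)\nabla_x E(\tau,\cdot)\cdot \nabla_v^k Y_{\tau,t}\, d\tau$, is moved to the left-hand side using the uniform-in-$s$ smallness bound
\[
\int_s^t (\tau-s)\|\nabla_x E(\tau)\|_{L^\infty}\, d\tau\lesssim \eps \int_0^\infty \tau \frac{\log(1+\tau)}{(1+\tau)^{d+1}}\, d\tau \lesssim \eps,
\]
which holds for $d\ge 3$; the absorption is legitimate once $\eps$ is sufficiently small. For every remaining term, either $m\ge 2$ or the chain-rule prefactor contributes at least one explicit power of $\tau$. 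Combining the inductive hypothesis on $\nabla_v^{j_i} Y_{\tau,t}$ with \eqref{E-decay} for $\nabla_x^m E$, each additional explicit power of $\tau$ is exactly compensated by one extra factor of $(1+\tau)^{-1}$ coming from the improved decay of $\nabla_x^m E$, so every such contribution reduces to an integral of the same form as in the base case and is bounded by $\eps\log(1+s)/(1+s)^{d-2}$.

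The principal technical obstacle is precisely this combinatorial bookkeeping in the Fa\`a di Bruno expansion: I must verify, uniformly over all admissible partitions arising from $k$-fold differentiation of the composite $E(\tau, x+\tau v+Y_{\tau,t}(x,v))$, that the net $\tau$-growth produced by differentiations of the explicit $\tau v$-argument is exactly matched by the decay gain in $\nabla_x^m E$, so that the final bound is $k$-independent. The parallel inductive argument for $\nabla_v^k W_{s,t}$ proceeds in the same way with the weight $(\tau-s)$ replaced by $1$, producing the improved decay $(1+s)^{-(d-1)}$ and closing the induction.
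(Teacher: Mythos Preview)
Your proposal is correct and follows essentially the same approach as the paper: induction on $k$, Fa\`a di Bruno to expand $\nabla_v^k$ of the composite $E(\tau,x+\tau v+Y_{\tau,t})$, absorption of the single top-order term $\int_s^t(\tau-s)\nabla_x E\cdot\nabla_v^k Y_{\tau,t}\,d\tau$ by smallness of $\eps$, and for the remaining terms the observation that every explicit $\tau$-factor coming from differentiation of the $\tau v$-argument is paired with an extra spatial derivative on $E$ and hence an extra factor $(1+\tau)^{-1}$. The only cosmetic difference is that the paper writes out the case $k=2$ by hand before invoking Fa\`a di Bruno for $k\ge 3$, whereas you go directly to the general argument.
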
 Before giving the proof, we recall the Faa di Bruno's formula in \cite{chemin}, which allows us to compute higher order derivatives of $Y_{s,t}$ and $W_{s,t}$ by the generalized chain rule:
\begin{lemma}\label{chain}
Let $u:\R^d\to \R^m$ and $F:\R^m\to \R$ be smooth functions. For each multi-index $\al\in \mathbb{N}^d$, we have 
\[
\pt^\al(F\circ u)=\sum_{\mu,\nu}C_{\mu,\nu}\pt^\mu F \prod_{1\le |\beta|\le |\al|,1\le j\le m}(\pt^\beta u^j)^{\nu_{\beta_j}}
\]
where $C_{\mu,\nu}$ are non negative integers, and the sum is taken over $\mu,\nu$ such that $1\le |\mu|\le |\al|$, $\nu_{\beta_j}\in \mathbb{N}^\star$, 
\[
\sum_{1\le |\beta|\le |\al|}\nu_{\beta_j}=\mu_j\qquad \text{for}\quad 1\le j\le m,\qquad\text{and}\quad \sum_{1\le |\beta|\le |\al|,1\le j\le m}\beta \nu_{\beta_j}=\al.
\]
\end{lemma}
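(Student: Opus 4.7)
The plan is to prove the formula by induction on $|\alpha|$, with the inductive step carried out by applying $\partial_i$ to each summand of the formula for $\partial^\alpha(F\circ u)$ and checking that the output is a nonnegative integer combination of terms of the same shape, now indexed by the admissible pairs $(\mu,\nu)$ at level $\alpha+e_i$. The base case $|\alpha|=1$ is the standard chain rule
\[
\partial_i(F\circ u)=\sum_{j=1}^m (\partial_j F)(u)\,\partial_i u^j,
\]
which fits the required form with $\mu=e_j$, $\nu_{e_i,j}=1$, and all other entries zero; the two constraints $\sum_\beta \nu_{\beta,j}=\mu_j$ and $\sum_{\beta,j}\beta\,\nu_{\beta,j}=e_i$ are satisfied by inspection.

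For the inductive step, assume the formula for $\partial^\alpha(F\circ u)$ with $|\alpha|\ge 1$ and differentiate once more in $x_i$. Writing a generic summand as
\[
T_{\mu,\nu}:=(\partial^\mu F)(u)\prod_{\beta,j}(\partial^\beta u^j)^{\nu_{\beta,j}},
\]
the Leibniz rule produces two families of contributions. In \emph{Type A}, $\partial_i$ hits the outer factor: the chain rule for $(\partial^\mu F)\circ u$ yields $\sum_{j'}(\partial^{\mu+e_{j'}}F)(u)\,\partial_i u^{j'}$, which replaces $\mu\mapsto\mu+e_{j'}$ and increments $\nu_{e_i,j'}$ by one. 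In \emph{Type B}, $\partial_i$ hits a factor $(\partial^\beta u^j)^{\nu_{\beta,j}}$, producing $\nu_{\beta,j}(\partial^\beta u^j)^{\nu_{\beta,j}-1}(\partial^{\beta+e_i}u^j)$, which leaves $\mu$ fixed but sends $\nu_{\beta,j}\mapsto\nu_{\beta,j}-1$ and $\nu_{\beta+e_i,j}\mapsto\nu_{\beta+e_i,j}+1$. In both cases the new numerical coefficient (the old $C_{\mu,\nu}$ multiplied by $1$ for Type A, or by the positive integer $\nu_{\beta,j}$ for Type B) is a nonnegative integer. I then check admissibility of the updated pair $(\mu',\nu')$ at level $\alpha+e_i$: in both types the row sum $\sum_\beta \nu_{\beta,j}$ adjusts in lockstep with $\mu_j$, while the weighted sum $\sum_{\beta,j}\beta\,\nu_{\beta,j}$ picks up exactly one extra $e_i$, matching the new total order. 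Regrouping contributions with identical $(\mu',\nu')$ and relabeling the coefficient sums as $C_{\mu',\nu'}$ yields the formula at level $\alpha+e_i$.

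The main obstacle is purely combinatorial bookkeeping: one must check that the prescription above is exhaustive, i.e.\ that every admissible pair $(\mu',\nu')$ at level $\alpha+e_i$ actually arises as some Type A or Type B descendant of an admissible pair at level $\alpha$, so that no term is silently dropped. This is handled by a reverse inspection: given admissible $(\mu',\nu')$, one either decrements some $\nu'_{\beta+e_i,j}$ and returns the unit to $\nu'_{\beta,j}$ (inverting Type B), or, when $\mu'_j\ge 1$ and $\nu'_{e_i,j}\ge 1$, decrements both $\mu'_j$ and $\nu'_{e_i,j}$ by one (inverting Type A). The constraints $\sum_\beta \nu'_{\beta,j}=\mu'_j$ and $\sum_{\beta,j}\beta\,\nu'_{\beta,j}=\alpha+e_i$ with $|\alpha+e_i|\ge 2$ guarantee that at least one such inverse produces an admissible precursor at level $\alpha$. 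Since the statement only asserts existence of nonnegative integer coefficients $C_{\mu,\nu}$ without specifying their values, no further explicit computation is required; the procedure realizes them inductively as finite sums of products of positive integers.
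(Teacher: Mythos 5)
Your induction argument is correct, but note that the paper does not actually prove this lemma: it simply recalls Fa\`a di Bruno's formula from the reference of Bahouri--Chemin--Danchin, so your self-contained proof is a genuine addition rather than a parallel to an argument in the text. Your base case and the two Leibniz contributions (Type A: the chain rule applied to $(\partial^\mu F)\circ u$, replacing $\mu$ by $\mu+e_{j'}$ and incrementing $\nu_{e_i,j'}$; Type B: lowering one exponent $\nu_{\beta,j}$ and raising $\nu_{\beta+e_i,j}$) do exactly the right bookkeeping: in both cases the row sums track $\mu_j$, the weighted sum $\sum_{\beta,j}\beta\,\nu_{\beta,j}$ gains precisely $e_i$, the bound $1\le|\mu|\le|\alpha|+1$ is preserved, and the new coefficients are nonnegative integers. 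One remark: the final paragraph on ``exhaustiveness'' is unnecessary. The lemma only asserts that $\partial^\alpha(F\circ u)$ equals a sum over admissible pairs with \emph{nonnegative} integer coefficients, so admissible pairs that never arise in the recursion simply carry $C_{\mu',\nu'}=0$; nothing can be ``silently dropped,'' and the reverse-inspection argument (which as written is the least precise part of your proof) can be deleted without loss. With that pruning, your proof is a clean and complete elementary derivation of the cited formula, which buys the paper self-containedness at the cost of a page of combinatorial bookkeeping that the authors chose to outsource to the literature.
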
~\\
\textit{Proof of Theorem \ref{EYW-decay}}:
Fix $k\in\{0,1,\cdots,N\}$. We shall prove that 
\[
\la s\ra^{d-2}\|\nabla_v^k Y_{s,t}\|_{L^\infty}+\la s\ra^{d-1}\|\nabla_v^k W_{s,t}\|_{L^\infty}\lesssim \eps \log(1+s).
\]
In \cite{T-R-HK}, the authors prove the above statement when $k\in\{0,1\}$, thus we shall only consider $k\ge 2$.
Let us first justify the bound for the case $k=2$. Applying $\pt_{v_iv_j}^2$ to both sides of \eqref{Y-V}, we have 
\[\bega
\pt_{v_i v_j}^2Y_{s,t}(x,v)=&\int_s^t (\tau-s)\pt_{x_i x_j}^2E(\tau,x+\tau v+Y_{\tau,t}(x,v))\lw(\tau+\pt_{v_j}Y_{\tau,t}(x,v)\rw)\lw(\tau+\pt_{v_i}Y_{\tau,t}(x,v)\rw)d\tau\\
&+\int_s^t (\tau-s) \pt_{x_i}E(\tau,x+\tau v+Y_{\tau,t}(x,v))\pt_{v_i v_j}^2Y_{\tau,t}(x,v)d\tau.
\enda \]
This implies
\[
\bega 
|\pt_{v_i v_j}^2Y_{s,t}(x,v)|&\le \int_s^t (\tau-s)\|\nabla_x^2 E(\tau)\|_{L^\infty}(\tau+\|\nabla_v Y_{\tau,t}\|_{L^\infty})^2d\tau+\int_s^t (\tau-s)\|\nabla_x E(\tau)\|_{L^\infty}\sup_{0\le s\le t}\|\nabla_v^2Y_{s,t}\|_{L^\infty}d\tau\\
&\lesssim \int_s^t (\tau-s)\cdot \eps\log(1+\tau)\la \tau\ra^{-d-2}(\tau+\eps)^2d\tau+\sup_{0\le s\le t}\|\nabla_v^2 Y_{s,t}\|_{L^\infty}\int_s^t (\tau-s)\cdot\eps \log(1+\tau)\la \tau\ra^{-d-1}d\tau.
\enda 
\]
Hence we get 
\[
\|\nabla_v^2 Y_{s,t}\|_{L^\infty}\lesssim \eps \frac{\log(1+s)}{(1+s)^{d-2}}+\lw(\eps\frac{\log(1+s)}{(1+s)^{d-1}}\rw)\sup_{0\le s\le t}\|\nabla_v^2Y_{s,t}\|_{L^\infty}.
\]
Thus, as long as $\eps  \frac{\log(1+s)}{(1+s)^{d-1}}$ is small, we have 
\[
\|\nabla_v^2 Y_{s,t}\|_{L^\infty}\lesssim \eps \frac{\log(1+s)}{(1+s)^{d-2}}.
\]
Now we estimate $\nabla_v^2 W_{s,t}$. Applying $\pt_{v_iv_j}^2$ to both sides of\eqref{Y-V}, we get 
\[\bega
\pt^2_{v_i v_j}W_{s,t}(x,v)&=-\int_s^t \lw(\pt_{x_ix_j}^2E(\tau,x+\tau v+Y_{\tau,t}(x,v))(\tau+\pt_{v_j}Y_{\tau,t}(x,v))\rw)\lw(\tau+\pt_{v_i}Y_{\tau,t}(x,v)\rw)d\tau\\
&-\int_s^t \pt_{x_i}E(\tau,x+\tau v+Y_{\tau,t}(x,v))\lw(\pt^2_{v_i v_j}Y_{\tau,t}(x,v)\rw)d\tau.
\enda 
\]
This implies 
\[\bega
|\pt^2_{v_i v_j}W_{s,t}(x,v)|&\le \int_s^t \|\nabla_x^2 E(\tau)\|_{L^\infty}(\tau+\|\nabla_v Y_{\tau,t}\|_{L^\infty})^2+\int_s^t \|\nabla_x E(\tau)\|_{L^\infty}\|\nabla_v^2 Y_{\tau,t}\|_{L^\infty}d\tau\\
\enda
\]
Using the fact that $\la t\ra^d\|E(t)\|_{L^\infty}+\la t\ra^{d+1}\|\nabla_x E(t)\|_{L^\infty}\lesssim \eps\log(1+t)$  and $\|\nabla_v^2 Y_{\tau,t}\|_{L^\infty}\lesssim \eps$, we get
\[\bega
|\pt_{v_iv_j}^2W_{s,t}(x,v)|&\lesssim \eps\int_s^t \frac{\log(1+\tau)}{(1+\tau)^{d+2}} (\tau+\eps)^2d\tau+\eps^2\int_s^t \frac{\log(1+\tau)}{(1+\tau)^{d+1}}d\tau\\
&\lesssim \eps \int_s^t \frac{\log(1+\tau)}{(1+\tau)^d}d\tau+\eps^2 \frac{\log(1+s)}{(1+s)^d}\lesssim \eps \frac{\log(1+s)}{(1+s)^{d-1}}.
\enda \]
Now for a general multi-index $\al$ with $k=|\al|\ge 3$, we proceed by induction on $k$, by assuming that the decay estimates \eqref{Y-decay} and \eqref{W-decay} are true for all index with length less than $k$. 
Applying the chain rule \eqref{chain} for $Y_{s,t}$, we get 
\beq\label{DY}
\bega 
\pt_v^\al Y_{s,t}&=\int_s^t(\tau-s)\sum_{(\mu,\nu)\in I}C_{\mu,\nu}\pt_x^\mu E \prod_{1\le |\beta|\le |\al|,1\le j\le d}\lw(\pt_v^\beta(x_j+\tau v_j+Y^j_{\tau,t}(x,v))\rw)^{\nu_{\beta_j}}d\tau\\
&=\int_s^t(\tau-s)\sum_{(\mu,\nu)\in I}C_{\mu,\nu}\pt_x^\mu E \prod_{1\le |\beta|\le |\al|,1\le j\le d}\lw(\pt_v^\beta(\tau v_j+Y^j_{\tau,t}(x,v))\rw)^{\nu_{\beta_j}}d\tau\\
&=\int_s^t(\tau-s)\sum_{(\mu,\nu)\in I}C_{\mu,\nu}\pt_x^\mu E \prod_{1\le |\beta|\le |\al|,1\le j\le d}\tau^{\nu_{\beta_j}}\lw(\pt_v^\beta v_j\rw)^{\nu_{\beta_j}}d\tau\\
&\quad+\int_s^t(\tau-s)\sum_{(\mu,\nu)\in I}C_{\mu,\nu}\pt_x^\mu E \prod_{1\le |\beta|\le |\al|,1\le j\le d}\lw(\pt_v^\beta Y^j_{\tau,t}(x,v)\rw)^{\nu_{\beta_j}}d\tau.
\enda 
\eeq 
where 
\[
I=\lw\{(\mu,\nu)\in \mathbb{N}^{2d}:\quad \sum_{1\le |\beta|\le |\al|}\nu_{\beta_j}=\mu_j\qquad\text{for}\quad 1\le j\le d, \sum_{1\le |\beta|\le |\al|,1\le j\le d}\beta\nu_{\beta_j}=\al\rw\}.
\]
Now we fix $\mu,\nu$ and estimate each term appearing in  \eqref{DY}. The first term can be estimated as follows:
\[\bega 
\int_s^t (\tau-s)\pt_x^\mu E\prod_{1\le |\beta|\le |\al|,1\le j\le d}\tau^{\nu_{\beta_j}} (\pt_v^\beta v_j)^{\nu_{\beta_j}}d\tau&\lesssim \int_s^t (\tau-s)\frac{\eps\log(1+\tau)}{(1+\tau)^{d+|\mu|}}\tau^{\sum_{j=1}^d\nu_{\beta_j}}d\tau\\
&\lesssim \int_s^t \tau^{\mu_j+1}\frac{\eps\log(1+\tau)}{(1+\tau)^{d+|\mu|}}d\tau\\
&\le \int_s^t \eps\frac{\log(1+\tau)}{(1+\tau)^{d-1}}d\tau\lesssim \eps \frac{\log(1+s)}{(1+s)^{d-2}}.\\
\enda 
\]
Now we estimate the second term in \eqref{DY}, which is
\beq\label{2nd}
\int_s^t(\tau-s)\pt_x^\mu E \prod_{1\le |\beta|\le |\al|,1\le j\le d}\lw(\pt_v^\beta Y^j_{\tau,t}(x,v)\rw)^{\nu_{\beta_j}}d\tau.
\eeq 
We consider two cases:~\\
\textbf{Case 1:} $\beta_j<|\al|$ for all $1\le j\le d.$~\\
In this case, we use the induction hypothesis on $\pt_v^\beta Y_{\tau,t}$, which is 
$
|\pt_v^\beta Y_{\tau,t}^j(x,v)|\lesssim \eps 
$.
Hence \eqref{2nd} can be bounded by 
\[\bega 
&\int_s^t (\tau-s)\|\pt_x^\mu E(\tau)\|_{L^\infty}\eps^{\sum_{j=1}^d\nu_{\beta_j}} d\tau\lesssim \int_s^t \tau \frac{\eps \log(1+\tau)}{(1+\tau)^{d+|\mu|}}\eps^{|\mu|}d\tau\lesssim \eps \frac{\log(1+s)}{(1+s)^{d-2}}.
\enda 
\]
\textbf{Case 2:} There exists $j_0\in \{1,2,\cdots, d\}$ such that $\beta_{j_0}=|\al|$. ~\\
In this case, since $|\beta|\le |\al|$, we have $\beta_j=0$ for all $j\neq j_0$. Hence the term is reduced to 
\[
\int_s^t (\tau-s)\pt_x^\mu E\cdot (\pt_{v_{j_0}}^{|\al|}Y_{\tau,t}^j)^{\nu_{|\al|}}O(\eps)d\tau,
\]
where we use the fact that $\|Y_{s,t}\|_{L^\infty}\lesssim \eps$. Moreover, since $\sum_{j=1}^d \beta \nu_{\beta_j}=\al$, we have $\nu_{|\al|}\le 1$. Hence 
\[\bega
&\int_s^t (\tau-s)\pt_x^\mu E\cdot (\pt_{v_{j_0}}^{|\al|}Y_{\tau,t}^j)^{\nu_{|\al|}}O(\eps)\lesssim \eps \int_s^t \tau \frac{\eps\log(1+\tau)}{(1+\tau)^{d+|\mu|}}\sup_{0\le s\le t}\|\pt_v^{|\al|}Y_{s,t}\|_{L^\infty}d\tau\\
&=\sup_{0\le s\le t}\|\pt_v^{|\al|}Y_{s,t}\|_{L^\infty}\int_s^t \eps^2 \frac{\log(1+\tau)}{(1+\tau)^{d+|\mu|-1}}d\tau\lesssim \eps^2 \sup_{0\le s\le t}\|\nabla_v^{|\al|}Y_{s,t}\|_{L^\infty}.
\enda 
\]
Thus, we get the inequality of the form 
\[
\|\nabla_v^\al Y_{s,t}\|_{L^\infty}\lesssim \eps \frac{\log(1+s)}{(1+s)^{d-2}}+\eps^2 \sup_{0\le s\le t}\|\nabla_v^{|\al|}Y_{s,t}\|_{L^\infty}.
\]
Hence for $\eps$ small, we get 
\[
\|\nabla_v^\al Y_{s,t}\|_{L^\infty}\lesssim \eps \frac{\log(1+s)}{(1+s)^{d-2}}\qquad\text{for}\quad \text{all}\quad 0\le s\le t.
\]
The proof is complete.
\subsection{Straightening the characteristics}

Next, we recall the following lemma about straightening the characteristics from \cite{T-R-HK}.
\begin{lemma} \label{D-Yst}
Let $Y_{s,t}$ be defined as in \eqref{rep-char} and \eqref{Y-V}. Assume that $E$ satisfies the decay estimates \eqref{E-decay}, there holds 
\[
\|\nabla_x Y_{s,t}\|_{L^\infty}\lesssim \frac{\eps\log(1+s)}{(1+s)^{d-1}}.
\]
\end{lemma}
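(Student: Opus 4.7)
The plan is to differentiate the fixed-point relation \eqref{Y-V} for $Y_{s,t}$ in the spatial variable $x$ and then close a Volterra-type inequality by a small-data bootstrap driven by the hypothesis \eqref{E-decay} on $\nabla_x E$. Differentiating \eqref{Y-V} in $x$ gives the matrix identity
\[
\nabla_x Y_{s,t}(x,v) \;=\; \int_s^t (\tau-s)\, \nabla_x E\bigl(\tau,\, x+\tau v+Y_{\tau,t}(x,v)\bigr)\,\bigl(I+\nabla_x Y_{\tau,t}(x,v)\bigr)\, d\tau ,
\]
where $I$ is the $d\times d$ identity; taking $L^\infty$ norms in $(x,v)$ and invoking $\|\nabla_x E(\tau)\|_{L^\infty}\lesssim \eps\log(1+\tau)/(1+\tau)^{d+1}$ yields the scalar inequality
\[
\|\nabla_x Y_{s,t}\|_{L^\infty} \;\leq\; C\eps \int_s^t (\tau-s)\, \frac{\log(1+\tau)}{(1+\tau)^{d+1}}\,\bigl(1+\|\nabla_x Y_{\tau,t}\|_{L^\infty}\bigr)\, d\tau .
\]

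The ``$1$'' in the last factor produces the target decay: using $\tau-s\leq 1+\tau$ and integrating by parts,
\[
C\eps \int_s^t (\tau-s)\, \frac{\log(1+\tau)}{(1+\tau)^{d+1}}\, d\tau \;\leq\; C\eps \int_s^\infty \frac{\log(1+\tau)}{(1+\tau)^d}\, d\tau \;\lesssim\; \eps\,\frac{\log(1+s)}{(1+s)^{d-1}},
\]
which is precisely the rate claimed (the integral converges at infinity since $d\geq 3$). It thus suffices to absorb the self-interaction piece involving $\|\nabla_x Y_{\tau,t}\|_{L^\infty}$.

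The cleanest way to handle the self-interaction is a two-step bootstrap. Step one: since $C\eps\int_0^\infty \tau\log(1+\tau)/(1+\tau)^{d+1}\, d\tau$ is uniformly finite and $O(\eps)$, taking the supremum of the displayed inequality over $s\in[0,t]$ produces
\[
\sup_{s\in[0,t]}\|\nabla_x Y_{s,t}\|_{L^\infty} \;\leq\; C\eps + C\eps \sup_{s\in[0,t]}\|\nabla_x Y_{s,t}\|_{L^\infty},
\]
so that for $\eps_0$ sufficiently small one obtains the crude uniform bound $\sup_{s}\|\nabla_x Y_{s,t}\|_{L^\infty}\lesssim \eps$. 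Step two: plugging this back into the self-interaction integral, that integral is at most $C\eps$ times the inhomogeneous integral already estimated, contributing $\lesssim \eps^2\log(1+s)/(1+s)^{d-1}$, which is absorbable into the leading term. Summing the two contributions proves the stated bound.

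The main subtlety is the orientation of the Volterra kernel, which runs from $s$ up to $t$: a pointwise Gronwall argument in $s$ is awkward, so one must first extract a uniform-in-$s$ bound and only then upgrade to the refined decay. Conceptually, the extra factor $(1+s)^{-1}$ relative to the $\nabla_v$-estimate in Theorem \ref{EYW-decay} comes from the identity $\nabla_x(x+\tau v+Y_{\tau,t})=I+\nabla_x Y_{\tau,t}$, which carries no explicit factor of $\tau$ (in contrast to $\nabla_v(x+\tau v+Y_{\tau,t})=\tau I+\nabla_v Y_{\tau,t}$), so differentiating in $x$ saves one power of $\tau$ at the leading order and thereby one power of $(1+s)$ after integration.
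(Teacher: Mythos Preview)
Your proof is correct and follows essentially the same approach as the paper: differentiate the integral relation \eqref{Y-V} for $Y_{s,t}$ in $x$, estimate the resulting Volterra-type inequality using the decay hypothesis on $\nabla_x E$, and absorb the self-interaction term via smallness of $\eps$. The paper writes the absorption in one line rather than the explicit two-step bootstrap you spell out, but the logic is identical.
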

\begin{proof}
By the definition of $Y_{s,t}$, we have 
\[
Y_{s,t}(x,v)=\int_s^t (\tau-s)E(\tau,x+\tau v+Y_{\tau,t}(x,v))d\tau
\]
Hence we get 
\[\bega
|\nabla_x Y_{s,t}(x,v)|&\le \int_s^t (\tau-s)\|\nabla_x E(\tau)\|_{L^\infty}(1+\|\nabla_x Y_{\tau,t}\|_{L^\infty})d\tau\\
&\lesssim \int_s^t (\tau-s)\frac{\eps\log(1+\tau)}{(1+\tau)^{d+1}}d\tau+\sup_{0\le \tau \le t}\|\nabla_x Y_{\tau,t}\|_{L^\infty}\int_s^t (\tau-s)\frac{\eps\log(1+\tau)}{(1+\tau)^{d+1}}d\tau\\
&\lesssim \eps \frac{\log(1+s)}{(1+s)^{d-1}}+\lw\{\eps\frac{\log(1+s)}{(1+s)^{d-1}} \rw\}\sup_{0\le s\le t}\|Y_{s,t}\|_{L^\infty}.
\enda 
\]
Hence as long as $\eps$ is small enough, we have 
\[
|\nabla_x Y_{s,t}(x,v)|\lesssim \frac{\eps\log(1+s)}{(1+s)^{d-1}}
\]
The proof is complete.
\end{proof}
\begin{lemma}\label{straighten}
For $0\le s\le t$, there exists a $C^1$ map $(x,v)\to \Psi_{s,t}(x,v)$ such that 
\[
X_{s,t}(x,\Psi_{s,t}(x,v))=x-(t-s)v
\]
for all $x,v\in \R^d$. Moreover, if $E$ satisfies the estimates \eqref{E-decay}, there holds
\[
\la s\ra^d|\Psi_{s,t}(x,v)-v|+\la s\ra^{d-1}|\nabla_v(\Psi_{s,t}(x,v)-v)|\lesssim \eps\log(1+s)
\]
for all $x,v\in \R^d$ and $0\le s\le t$.
\end{lemma}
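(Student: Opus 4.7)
The plan is to construct $\Psi_{s,t}$ via a fixed-point argument (equivalently, the implicit function theorem) and then extract the pointwise bounds from the integral representation of $Y_{s,t}$, using the decay estimates of $E$ and $W_{s,t}$ already established in Theorem \ref{EYW-decay} and Lemma \ref{D-Yst}.

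Using \eqref{rep-char}, $X_{s,t}(x, v') = x - (t-s) v' + Y_{s,t}(x - t v', v')$, so the equation $X_{s,t}(x,\Psi) = x - (t-s)v$ is equivalent, for $s<t$, to the fixed-point equation
\[
\Psi = v + \frac{1}{t-s}\, Y_{s,t}(x - t\Psi, \Psi),
\]
with $\Psi_{t,t}(x,v) = v$ trivially. The Jacobian of $v'\mapsto X_{s,t}(x,v')$ is $-(t-s)I - t\,\nabla_x Y_{s,t} + \nabla_v Y_{s,t}$; after dividing by $(t-s)$ and splitting into $t-s\le 1$ and $t-s\ge 1$ to control the factor $t/(t-s)$, the estimates of Lemma \ref{D-Yst} and Theorem \ref{EYW-decay} (with $k=1$) make the correction small compared to the identity when $d\ge 3$ and $\eps$ is small. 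The implicit function theorem then furnishes a $C^1$ map $\Psi_{s,t}(x,v)$ with the required regularity.

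For the pointwise bound on $|\Psi-v|$, I rewrite $(t-s)(\Psi-v)=Y_{s,t}(x-t\Psi,\Psi)$ and use the integral form $Y_{s,t}(x-t\Psi,\Psi)=\int_s^t(\tau-s)E(\tau,X_{\tau,t}(x,\Psi))\,d\tau$. Integrating by parts in $\tau$, using $\pt_\tau W_{\tau,t}(x-t\Psi,\Psi)=E(\tau,X_{\tau,t}(x,\Psi))$ together with $W_{t,t}=0$, yields the dual representation
\[
(t-s)(\Psi-v)=-\int_s^t W_{\tau,t}(x-t\Psi,\Psi)\,d\tau.
\]
Combining the direct bound (via $\|E(\tau)\|_{L^\infty}\lesssim \eps\log(1+\tau)/(1+\tau)^d$, effective when $t-s$ is small relative to $1+s$) with the integration-by-parts bound (via $\|W_{\tau,t}\|_{L^\infty}\lesssim \eps\log(1+\tau)/(1+\tau)^{d-1}$, effective when $t-s$ is large) gives the claimed $\la s\ra^{-d}$ decay. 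For the derivative estimate, I differentiate the fixed-point equation in $v$ to obtain $\nabla_v\Psi=(I-A)^{-1}$ with $A=(t-s)^{-1}(-t\,\nabla_x Y_{s,t}+\nabla_v Y_{s,t})|_{(x-t\Psi,\Psi)}$, hence $\nabla_v(\Psi-v)=(I-A)^{-1}A$; controlling $\|A\|_{L^\infty}$ by combining the $(t-s)$-scaled bound on $\nabla_x Y_{s,t}$ (good for small $t-s$) with its unscaled version (good for large $t-s$) yields the advertised $\la s\ra^{-(d-1)}$ decay.

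The main obstacle will be the sharp rate $\la s\ra^{-d}$ for $|\Psi-v|$: neither the direct nor the IBP form alone is sufficient in the intermediate regime $1\lesssim t-s\lesssim 1+s$, and the proof must carefully match the two representations and exploit the monotonicity of $\tau\mapsto\log(1+\tau)/(1+\tau)^d$ on $[s,t]$. Once $\|A\|_{L^\infty}$ is in hand, the derivative bound and the Neumann-series inversion $(I-A)^{-1}$ are routine for $\eps$ small.
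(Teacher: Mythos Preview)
Your fixed-point construction of $\Psi_{s,t}$ is fine, but the estimates you outline do not close. The difficulty you flag as ``the main obstacle'' is in fact fatal for the argument as written: with only $\|E(\tau)\|_{L^\infty}\lesssim\eps\log(1+\tau)\,\la\tau\ra^{-d}$, neither the direct bound nor your integration-by-parts identity $(t-s)(\Psi-v)=-\int_s^t W_{\tau,t}\,d\tau$ yields $\la s\ra^{-d}$ in the intermediate regime. Take $t-s\sim 1+s$: the direct bound gives $\frac{1}{t-s}\int_s^t(\tau-s)\la\tau\ra^{-d}\,d\tau\sim\la s\ra^{-(d-1)}$, and the IBP bound gives $\frac{1}{t-s}\int_s^t\la\tau\ra^{-(d-1)}\,d\tau\sim\la s\ra^{-(d-1)}$ as well. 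No monotonicity trick recovers the missing factor.

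What the paper exploits, and what you are missing, is one extra power of decay for $E$ itself: since $E=-(1-\triangle_x)^{-1}\nabla_x\rho$ with $(1-\triangle_x)^{-1}$ bounded on $L^\infty$, one has $\|E(\tau)\|_{L^\infty}\lesssim\|\nabla_x\rho(\tau)\|_{L^\infty}\lesssim\eps\log(1+\tau)\,\la\tau\ra^{-(d+1)}$. With this, the paper simply sets $\Phi_{s,t}(x,v)=-\frac{1}{t-s}Y_{s,t}(x-tv,v)$, uses $\frac{\tau-s}{t-s}\le 1$ to get $|\Phi_{s,t}|\le\int_s^t\|E(\tau)\|_{L^\infty}\,d\tau\lesssim\eps\log(1+s)\,\la s\ra^{-d}$, and no case-splitting or IBP is needed. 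The same issue afflicts your derivative estimate: bounding $A=(t-s)^{-1}(-t\nabla_xY_{s,t}+\nabla_vY_{s,t})$ via the already-established $\|\nabla_vY_{s,t}\|_{L^\infty}\lesssim\la s\ra^{-(d-2)}$ only yields $\|A\|_{L^\infty}\lesssim\la s\ra^{-(d-2)}$ when $t-s\sim 1$, again one power short. The paper instead differentiates the integral representation of $\Phi_{s,t}$ and uses $\|\nabla_xE(\tau)\|_{L^\infty}\lesssim\la\tau\ra^{-(d+1)}$ inside the integral, which gives the correct rate directly.
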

\begin{proof}
We write 
\[
X_{s,t}(x,v)=x-(t-s)(v+\Phi_{s,t}(x,v))
\]
and will show that $(x,v)\to (x,v+\Phi_{s,t}(x,v))$ is a $C^1$ differomorphism. To this end, we prove that 
\[
\la s\ra^d \|\Phi_{s,t}\|_{L^\infty}+\la s\ra^d \|\nabla_x\Phi_{s,t}\|_{L^\infty}+\la s\ra^{d-1}\|\nabla_v\Phi_{s,t}\|_{L^\infty}\lesssim \eps\log(1+s)
\]
We have 
\beq\label{Phi}
\bega
\Phi_{s,t}(x,v)&=-\frac{1}{t-s}(X_{s,t}(x,v)-(t-s)v)\\
&=-\frac{1}{t-s}\int_s^t (\tau-s)E(\tau,x-(t-\tau)v+Y_{\tau,t}(x-vt,v))d\tau\\
&\lesssim \frac{1}{t-s}\int_s^t (\tau-s)\|E(\tau)\|_{L^\infty}d\tau\lesssim \int_s^t \|E(\tau)\|_{L^\infty}d\tau.
\enda 
\eeq
Since $E=-\nabla_x(1-\triangle_x)^{-1}\rho$, we have 
\[
\|E(\tau)\|_{L^\infty}\lesssim \|\nabla_x \rho(\tau)\|_{L^\infty}\lesssim \frac{\eps\log(1+\tau)}{(1+\tau)^{d+1}}.
\]
Thus we have 
\[
|\Phi_{s,t}(x,v)|\lesssim \int_s^t \frac{\eps\log(1+\tau)}{(1+\tau)^{d+1}}d\tau\lesssim \eps \frac{\log(1+s)}{(1+s)^d}.
\]
Thus $\la s\ra^d\|\Phi_{s,t}\|_{L^\infty}\lesssim \eps\log(1+s)$.
Next, applying $\nabla_x$ to both sides of \eqref{Phi}, we have 
\[
\|\nabla_x\Phi_{s,t}\|_{L^\infty}\lesssim \frac{1}{t-s}\int_s^t (\tau-s)\|\nabla_x E(\tau)\|_{L^\infty}\lw(1+\|\nabla_xY_{\tau,t}\|_{L^\infty}\rw)d\tau.
\]
Now using lemma \ref{D-Yst}, we get
\[
\|\nabla_x \Phi_{s,t}\|_{L^\infty}\lesssim \int_s^t\frac{\eps\log(1+\tau)}{(1+\tau)^{d+1}}d\tau\lesssim \eps\la s\ra^{-d}\log(1+s).
\]
Now we bound $\|\nabla_v\Phi_{s,t}\|_{L^\infty}$. Applying $\nabla_v$ to both sides of \eqref{Phi}, we get 
\[
\|\nabla_v\Phi_{s,t}\|_{L^\infty}\lesssim \frac{1}{t-s}\int_s^t (\tau-s)\|\nabla_x E(\tau)\|_{L^\infty}\lw\{(t-\tau)+t\|\nabla_xY_{\tau,t}\|_{L^\infty}
\rw\}d\tau.
\]
Again, using Lemma \ref{D-Yst}, we have
\[\bega
\|\nabla_v\Phi_{s,t}\|_{L^\infty}&\lesssim \frac{1}{t-s}\int_s^t (\tau-s)\frac{\eps\log(1+\tau)}{(1+\tau)^{d+1}}\lw((t-\tau)+\frac{\eps t\log(1+\tau)}{(1+\tau)^{d-1}}\rw)d\tau\\
&\lesssim \eps \frac{\log(1+s)}{(1+s)^{d-1}}+\eps\int_s^t\frac{ t(\tau-s)}{t-s}\frac{\log(1+\tau)^2}{(1+\tau)^{d-1}}d\tau\\
&\lesssim \eps\frac{\log(1+s)}{(1+s)^{d-1}}+\eps\int_s^t (\tau-s)\frac{\log(1+\tau)^2}{(1+\tau)^{d-1}}d\tau+\eps\int_s^t (\tau-s)^2\frac{\log(1+\tau)^2}{(1+\tau)^{d-1}}d\tau\\
&\lesssim \eps\frac{\log(1+s)}{(1+s)^{d-1}}.
\enda 
\]
Hence, the map $(x,v)\to(x,v+\Phi_{s,t}(x,v))$ is a $C^1$ differomorphism. Thus there exists a $C^1$ differomorphism $v\to \Psi_{s,t}(x,v)$ such that 
\[
X_{s,t}(x,\Psi_{s,t}(x,v))=x-(t-s)v.
\] 
Combining this with $X_{s,t}(x,v)=x-(t-s)(v+\Phi_{s,t}(x,v))$, we have 
\[\begin{cases}
|\Psi_{s,t}(x,v)-v|&\lesssim \|\Phi_{s,t}\|_{L^\infty}\lesssim \eps\la s\ra^{-d}\log(1+s),\\
|\nabla_v(\Psi_{s,t}(x,v)-v)|&\lesssim \|\nabla_v\Phi_{s,t}\|_{L^\infty}\lesssim \eps \la s\ra^{-(d-1)}\log(1+s).
\end{cases}
\]
The proof is complete.
\end{proof}
\section{Decay estimates for the forcing term}\label{sec4}
In this section, we derive decay estimates for the derivatives of the forcing term, appearing in the equation \eqref{rho-eq}. The forcing term $S(t,x)$ is given by 
\beq\label{S}
\bega
S(t,x)=&\int_{\R^d}f_0(X_{0,t}(x,v),V_{0,t}(x,v))dv+\int_0^t\int_{\R^d} E(s,x-(t-s)v)\cdot\nabla_v\mu(v)dvds\\
&-\int_0^t \int_{\R^d}E(s,X_{s,t}(x,v))\cdot\nabla_v\mu(V_{s,t}(x,v))dvds\\
&=\mathcal{I}(t,x)+\mathcal{R}_L(t,x)-\mathcal{R}_{NL}(t,x),
\enda 
\eeq
where
\[\begin{cases}
\mathcal I(t,x)&=\int_{\R^d}f_0(X_{0,t}(x,v),V_{0,t}(x,v))dv,\\
\mathcal R_L(t,x)&=\int_0^t\int_{\R^d}E(s,x-(t-s)v)\cdot\nabla_v\mu(v)dvds,\\
\mathcal R_{NL}(t,x)&=\int_0^t\int_{\R^d}E(s,X_{s,t}(x,v))\cdot\nabla_v\mu(V_{s,t}(x,v))dvds.
\end{cases}
\]
Fix $N\ge 1$, we will give decay estimates for $\|S\|_{Y_t^N}$, under the decaying assumptions on derivatives of $E$, $Y_{s,t}$ and $W_{s,t}$.
We also recall the smallness assumption for the initial perturbation $f_0(x,v)$ as follows:
\beq\label{small}
\max_{0\le k\le N}\|\nabla_{x,v}^{k}f_0\|_{L^1_xL^\infty_v}\le \eps_0.
\eeq
Our main theorem is as follows:
\begin{theorem}
Let $N>1$ be an integer. Assume that $E, W_{s,t},Y_{s,t}$ satisfy the decay estimates \eqref{E-decay}, \eqref{W-decay} and \eqref{Y-decay} and $f_0$ satisfies the smallness assumption \eqref{small}, there holds 
\beq\label{I-bound}
\sup_{0\le k\le N}\lw(\la t\ra^k\|\nabla_x^k \mathcal I (t)\|_{L^1}+\la t\ra^{d+k}
\|\nabla_x^k \mathcal I (t)\|_{L^\infty}\rw)\lesssim\eps_0,
\eeq 
and 
\[
\sup_{0\le k\le N} \lw(
\la t\ra^k\|\nabla_x^k\mathcal R (t)\|_{L^1}+\la t\ra^{d+k}\|\nabla_x^k \mathcal R (t)\|_{L^\infty}
\rw)\lesssim \eps^2.
\]

\end{theorem}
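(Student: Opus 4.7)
The plan is to bound $\mathcal I$ and $\mathcal R:=\mathcal R_L-\mathcal R_{NL}$ separately, inducting on the derivative order $k$. For $\mathcal I(t,x)$, the starting observation is that if the characteristics $(X_{0,t},V_{0,t})$ were replaced by the free transport $(x-tv,v)$, then $\mathcal I$ would equal the free transport density $\int f_0(x-tv,v)\,dv$, for which the claimed $\la t\ra^{-d-k}$ decay in $L^\infty$ and $\la t\ra^{-k}$ decay in $L^1$ follow verbatim from the change of variables $w=x-tv$ as in \eqref{free}. To reduce to this situation I apply Lemma \ref{straighten}: changing variables $v\mapsto\Psi_{0,t}(x,v)$ turns $X_{0,t}(x,v)$ into $x-tv$ and replaces $V_{0,t}$ by $\Psi_{0,t}(x,v)+W_{0,t}(x-tv,\Psi_{0,t}(x,v))$, with Jacobian $1+O(\eps)$. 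The subsequent change $w=x-tv$ produces the $t^{-d}$ factor, and a Faà di Bruno expansion (Lemma \ref{chain}) of $\nabla_x^k$ applied after straightening organizes the derivatives into a finite family of terms: those where $\nabla_x$ reaches the velocity slot of $f_0$ via $(x-w)/t$ contribute an additional $t^{-1}$, while those landing on $\Psi_{0,t}-v$, on $W_{0,t}$, or on the Jacobian decay even faster by Theorem \ref{EYW-decay} and Lemma \ref{D-Yst}. Summing these against $\nabla_{x,v}^\gamma f_0$ with $|\gamma|\le k$ and using the smallness \eqref{small} of $f_0$ closes the $\eps_0$ bound.

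For $\mathcal R$ the key structural point is that when the nonlinear characteristics reduce to the free ones, $\mathcal R_L=\mathcal R_{NL}$ exactly, so the difference must factor through the perturbations $Y_{s,t}$ and $W_{s,t}$. Concretely, I write the $(s,v)$ integrand as the telescope
\[
\bigl[E(s,x-(t-s)v)-E(s,X_{s,t})\bigr]\cdot\nabla_v\mu(v)+E(s,X_{s,t})\cdot\bigl[\nabla_v\mu(v)-\nabla_v\mu(V_{s,t})\bigr],
\]
apply the mean value theorem with $X_{s,t}-(x-(t-s)v)=Y_{s,t}(x-tv,v)$ and $V_{s,t}-v=W_{s,t}(x-tv,v)$, and read off an integrand bounded by $\|\nabla_x E(s)\|_{L^\infty}|Y_{s,t}||\nabla_v\mu(v)|+\|E(s)\|_{L^\infty}|W_{s,t}|\|\nabla_v^2\mu\|_{L^\infty}$, which is $O(\eps^2)$ pointwise by \eqref{E-decay} and Theorem \ref{EYW-decay}. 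To upgrade this pointwise smallness into the sought $\la t\ra^{-d-k}$ decay in $L^\infty$ I perform the dispersive change of variables $w=x-(t-s)v$ on the bulk region $s\in[0,t/2]$, extracting a $(t-s)^{-d}\sim t^{-d}$ factor, and for $s\in[t/2,t]$ where that change is singular I use the zero mean property $\int\nabla_v\mu\,dv=0$ together with a Taylor expansion of $E$ (equivalently an integration by parts in $v$) to absorb the remaining time loss; in $L^1$ no such change of variables is needed.

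The main obstacle is the combinatorial classification of all terms produced when $\nabla_x^k$ is applied to $\mathcal I$ and $\mathcal R$, which is precisely what the structural lemmas (Lemmas \ref{formI} and \ref{form}) announced in the outline are designed to accomplish. After straightening, the $x$-dependence is distributed among (i) $f_0$ or $\nabla_v\mu$ evaluated at a velocity of the form $(x-w)/(t-s)$ plus a small perturbation, (ii) the Jacobian factors from $\Psi_{s,t}$, and (iii) the $Y_{s,t}$ and $W_{s,t}$ themselves. Each $\nabla_x$ that reaches (i) through the velocity slot gains $(t-s)^{-1}$, while each one landing on a $Y$ or $W$ factor gains at least $(1+s)^{-1}$ via Lemma \ref{D-Yst} combined with Theorem \ref{EYW-decay}. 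The induction on $k$ then replaces any high-order derivative of the characteristics by lower-order ones multiplied by further powers of $\eps$, with the self-referential contribution absorbed by the smallness of $\eps$, exactly in the spirit of the $k=2$ argument carried out in the proof of Theorem \ref{EYW-decay}. The book-keeping at each order $k$ — tracking which $\nabla_x$'s reach which factors and summing the resulting finite sum in $\eps$ — is the main technical content.
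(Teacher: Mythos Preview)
Your high-level plan is sound, but the two concrete mechanisms you propose diverge from the paper's, and in the case of $\mathcal I$ contain a gap.

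For $\mathcal I$ the paper does \emph{not} straighten with $\Psi_{0,t}$. Lemma~\ref{formI} simply substitutes $w=x-tv$ into the representation $X_{0,t}(x,v)=x-tv+Y_{0,t}(x-tv,v)$, $V_{0,t}(x,v)=v+W_{0,t}(x-tv,v)$, after which the only $x$-dependence sits in the second (velocity) slot of $Y_{0,t},W_{0,t}$ via $(x-w)/t$. Each $\nabla_x$ therefore becomes $\tfrac1t\nabla_v$ acting on $Y_{0,t}$ or $W_{0,t}$, and the resulting $\nabla_v^kY_{0,t},\nabla_v^kW_{0,t}$ factors are exactly what Theorem~\ref{EYW-decay} controls. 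Your route through $\Psi_{0,t}$ leaves $\Psi_{0,t}(x,\cdot)$ and its Jacobian inside the integral, so $\nabla_x^k$ demands $C^k$ bounds on $\Psi_{0,t}$; Lemma~\ref{straighten} only supplies $C^1$, and neither Theorem~\ref{EYW-decay} (purely $\nabla_v^k$) nor Lemma~\ref{D-Yst} (a single $\nabla_x$) fills that hole. The paper's direct substitution avoids this entirely.

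For $\mathcal R$ the paper's key device is not your mean-value telescope but the recursive identity of Lemma~\ref{chain-T},
\[
\partial_j\mathcal T(E,\mu)=\tfrac1t\bigl(\mathcal T(s\partial_jE,\mu)+\mathcal T(E,\partial_j\mu)\bigr)+\text{(type-II remainder)},
\]
which, iterated (Lemma~\ref{form}), shows that $\nabla_x^n\mathcal R$ splits into type-I terms $\tfrac1{t^n}\mathcal T(s^k\nabla_x^kE,f(\mu))$ and type-II terms carrying an explicit $\nabla_v^mY_{s,t}$ or $\nabla_v^mW_{s,t}$ factor. The point is that type-I terms are bounded by the \emph{same} base-case estimate (Proposition~\ref{assume}) applied to the field $s^k\nabla_x^kE$, which obeys the same decay as $E$ thanks to \eqref{E-decay}; only there is the straightening $\Psi_{s,t}$ invoked, and only at $C^1$ level. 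Your MVT telescope destroys the $\mathcal T$ structure at the outset, and your integration-by-parts on $[t/2,t]$ is unnecessary (on that range the combined decay of $Y,W,E$ already gives $\eps^2\la t\ra^{-d}$ pointwise). Your approach might be pushed through, but it would amount to re-deriving the type-II classification without the clean reduction of the top-order terms to the $k=0$ case that Lemma~\ref{chain-T} provides.
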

\subsection{Decay estimates for the initial data term}
First, we estimate $\mathcal I(t,x)$, under suitable smallness assumption \eqref{small} on the initial data $f_0$. In \cite{T-R-HK}, the authors prove that 
\[
\|\mathcal I (t)\|_{L^1}+\la t\ra^d\|\mathcal I(t)\|_{L^\infty}+\la t\ra \|\nabla_x\mathcal I(t)\|_{L^1}+\la t\ra^{d+1}\|\nabla_x \mathcal I(t)\|_{L^\infty}\lesssim \eps_0.
\]
Hence, we shall establish the bound \eqref{I-bound} for $k\ge 2$.
First, we establish the following lemma 
\begin{lemma}\label{formI} Let $k\ge 2$ and
\[
\mathcal I (t,x)=\int_{\R^d}f_0(X_{0,t}(x,v),V_{0,t}(x,v))dv.
\]
The term $\nabla_x^k \mathcal{I}(t,x)$ can be written as a sum of many terms, which are all in the  form
\beq\label{term}
\int_{\R^d}\nabla_{x,v}^{\al}f_0\cdot (\nabla_v^{\beta_1} Y_{0,t})^{k_1}\cdots (\nabla_v^{\beta_r}Y_{0,t})^{k_r}\cdot (\nabla_v^{\gamma_1} W_{0,t})^{s_1}\cdots(\nabla_v^{\gamma_t}W_{0,t})^{s_t}\dfrac{dw}{t^{d+k}}
\eeq 
where $\al$, $(\beta_1,k_1),\cdots,(\beta_r,k_2)$, and $(\gamma_1,s_1),\cdots,(\gamma_t,s_t)$ satisfy
\beq\label{index}
1\le |\al|\le k\qquad \text{and}\qquad  (\beta_1k_1+\cdots+\beta_rk_r)+(\gamma_1s_1+\cdots+\gamma_t s_t)\le k.
\eeq 
\end{lemma}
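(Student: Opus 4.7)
My plan is to pass to the natural displacement variable $w = x - tv$, and then prove the structural formula by induction on $k$. At $s=0$ the representation \eqref{rep-char} gives $X_{0,t}(x,v) = w + Y_{0,t}(w, (x-w)/t)$ and $V_{0,t}(x,v) = (x-w)/t + W_{0,t}(w, (x-w)/t)$, and the Jacobian of the change of variables is $t^{-d}$, so
\[
\mathcal I(t,x) = \int_{\R^d} f_0\Big(w + Y_{0,t}(w, u),\; u + W_{0,t}(w, u)\Big) \frac{dw}{t^d}, \qquad u := \frac{x-w}{t}.
\]
Crucially, the $x$-dependence of the integrand enters only through $u$, and $\partial_{x_m} u_i = \delta_{im}/t$, so every chain-rule application of $\partial_{x_m}$ produces exactly one factor of $1/t$; after $k$ differentiations the overall prefactor is therefore $t^{-(d+k)}$, as desired.

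The base case $k=1$ is by direct computation: differentiating under the integral gives a sum of schematic terms of the form $(\partial_{X_i} f_0)\cdot t^{-1}\partial_{v_m} Y^i_{0,t}$, $(\partial_{V_i}f_0)\cdot t^{-1}\partial_{v_m}W^i_{0,t}$, and $(\partial_{V_m}f_0)\cdot t^{-1}$, each of which fits \eqref{term} with $|\alpha|=1$ and total derivative count in the $Y/W$ product at most one.

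For the inductive step, I apply $\partial_{x_m}$ to a typical term of the form \eqref{term}. By the Leibniz rule, the derivative either (a) lands on the $f_0^{(\alpha)}$ factor, increasing $|\alpha|$ by one and (by the chain rule through $u$) producing a new factor which is one of $t^{-1}\partial_{v_m}Y_{0,t}$, $t^{-1}\partial_{v_m}W_{0,t}$, or a constant times $t^{-1}$; or (b) lands on one of the existing $\nabla_v^{\beta_i} Y_{0,t}$ or $\nabla_v^{\gamma_j} W_{0,t}$ factors, raising its order by one and producing a $t^{-1}$. In every case a fresh factor $t^{-1}$ appears, turning $t^{-(d+k)}$ into $t^{-(d+k+1)}$. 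Case (a) keeps the form with $|\alpha|\mapsto |\alpha|+1 \le k+1$ and the sum $\sum |\beta_i|k_i + \sum |\gamma_j|s_j$ increasing by at most one (zero if the new factor is a constant); case (b) keeps $|\alpha|$ fixed but increases the sum by exactly one. In either case the new sum is $\le k+1$. The lower bound $|\alpha|\ge 1$ is preserved because the first derivative (when $\nabla_x^k$ is assembled by successive differentiation starting from $f_0$ alone) must land on $f_0$. This closes the induction.

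The proof contains no genuine analytic difficulty; the entire argument is combinatorial bookkeeping. The subtle point to get right is that the constant pieces coming from $\partial_{x_m}(u_i + W^i_{0,t}) = t^{-1}(\delta_{im} + \partial_{v_m}W^i_{0,t})$ do contribute to $|\alpha|$ but do \emph{not} contribute to the derivative sum $\sum |\beta_i|k_i + \sum |\gamma_j|s_j$; this is precisely why the constraint \eqref{index} on that sum is an inequality $\le k$ rather than an equality.
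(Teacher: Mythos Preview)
Your proposal is correct and follows essentially the same approach as the paper: the change of variables $w=x-tv$, followed by induction on $k$ via the Leibniz rule, distinguishing whether the new $\partial_{x_m}$ hits the $f_0$ factor or one of the existing $\nabla_v^{\beta_i}Y_{0,t}$, $\nabla_v^{\gamma_j}W_{0,t}$ factors. The only cosmetic difference is that the paper takes $k=2$ as the explicit base case (since the lemma is stated for $k\ge 2$), whereas you start the induction at $k=1$; your version is if anything slightly cleaner, and your remark about why the constraint on $\sum|\beta_i|k_i+\sum|\gamma_j|s_j$ is an inequality rather than an equality is a nice clarification that the paper leaves implicit.
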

\begin{proof}
From \eqref{rep-char}, we get
\[
\mathcal{I}(t,x)=\int_{\R^d}f_0(x-tv+Y_{0,t}(x-tv,v),v+W_{0,t}(x-tv,v))dv.
\]
Let $w=x-tv$, we get 
\[
\mathcal I (t,x)=\int_{\R^d}f_0\lw(w+Y_{0,t}(w,\frac{x-w}{t}),\frac{x-w}{t}+W_{0,t}(w,\frac{x-w}{t})\rw)\frac{dw}{t^d}.
\]
By a direct calculation, we have 
\beq\label{diff2-I}
\bega 
\pt^2_{x_i x_j}\mathcal{I}(t,x)&=\int_{\R^3}\lw(\pt_{x_i x_j}f_0\pt_{v_j}Y_{0,t}+\pt_{x_i v_j}f_0(1+\pt_{v_j}W_{0,t})\rw)\lw(\pt_{v_i}Y_{0,t}\rw)\frac{dw}{t^{d+2}}+\int_{\R^3}\lw(\pt_{x_i} f_0\rw)\lw(\pt_{v_i v_j} Y_{0,t}\rw)\frac{dw}{t^{d+2}}\\
&+\int_{\R^3}\lw(\pt_{x_j v_i}f_0\pt_{v_j}Y_{0,t}+\pt_{v_i v_j}f_0(1+\pt_{v_j}W_{0,t})\rw)\lw(\pt_{v_i}W_{0,t}\rw)\frac{dw}{t^{d+2}}+\int_{\R^3}\lw(\pt_{v_i}f_0\rw)\lw(\pt_{v_i v_j}W_{0,t}\rw)\frac{dw}{t^{d+2}}.
\enda
\eeq 
It is clear from the above that  \eqref{diff2-I} that 
\[\bega
\nabla_x^2 \mathcal{I}(t,x)=&\int_{\R^d}\{(\nabla_x^2 f_0)(\nabla_v Y_{0,t})+(\nabla_{x,v}f_0)(\nabla_v Y_{0,t})+(\nabla_x f_0)(\nabla_v^2 Y_{0,t})+(\nabla_{x,v}^2f_0)(\nabla_v Y_{0,t})\\
&+(\nabla_v^2 f_0)(\nabla_v W_{0,t})+(\nabla_v^2f_0)(\nabla_v W_{0,t})^2+(\nabla_v f_0)(\nabla_v^2 W_{0,t})\}\frac{dw}{t^{d+2}}
\enda 
\]
which satisfies the hypothesis for $k=2$. Now by induction, we assume that this statement is true for $k$, and we shall prove it for $k+1$. Applying $\pt_{x_i}$ to the term \eqref{term} and using the product rules, we have three types of terms that appear, namely: 
\[\bega
I_1&=\int_{\R^d}\dfrac{d}{dx_i}(\nabla_{x,v}^\al f_0)(\nabla_v^{\beta_1} Y_{0,t})^{k_1}\cdots (\nabla_v^{\beta_r}Y_{0,t})^{k_r}\cdot (\nabla_v^{\gamma_1} W_{0,t})^{s_1}\cdots(\nabla_v^{\gamma_t}W_{0,t})^{s_t}\dfrac{dw}{t^{d+k}}\\
I_2&=\int_{\R^d}(\nabla_{x,v}^\al f_0)(\nabla_v Y_{0,t})(\nabla_v^{\beta_1} Y_{0,t})^{k_1-1}\cdots (\nabla_v^{\beta_r}Y_{0,t})^{k_r}\cdot (\nabla_v^{\gamma_1} W_{0,t})^{s_1}\cdots(\nabla_v^{\gamma_t}W_{0,t})^{s_t}\dfrac{dw}{t^{d+k+1}}\\
I_3&=\int_{\R^d}(\nabla_{x,v}^\al f_0)(\nabla_v^{\beta_1} Y_{0,t})^{k_1}\cdots (\nabla_v^{\beta_r}Y_{0,t})^{k_r}\cdot
(\nabla_vW_{0,t}) (\nabla_v^{\gamma_1} W_{0,t})^{s_1-1}\cdots(\nabla_v^{\gamma_t}W_{0,t})^{s_t}\dfrac{dw}{t^{d+k+1}}
\enda
\]
Here, $I_1, I_2,I_3$ appear when $\pt_{x_i}$ hits $\nabla_{x,v}^\al f_0, (\nabla_v^{\beta_1}Y_{0,t})^{k_1}$ and $(\nabla_v^{\gamma_1}W_{0,t})^{s_1}$ respectively. Note that we assume that the derivative hits the above terms on just the pairs $(\beta_1,k_1)$ or $(\gamma_1,s_1)$, as this is up to a permutation of indices.~\\
\textbf{Treating $I_1$:} ~\\ By a direct calculation, we see that $I_1$ can be written as 
\[
\int_{\R^d}\lw(\nabla_{x,v}^{\al+1}f_0\cdot\nabla_v Y_{0,t}+\nabla_{x,v}^{\al+1}f_0(1+\nabla_v W_{0,t})
\rw)(\nabla_v^{\beta_1} Y_{0,t})^{k_1}\cdots (\nabla_v^{\beta_r}Y_{0,t})^{k_r}\cdot (\nabla_v^{\gamma_1} W_{0,t})^{s_1}\cdots(\nabla_v^{\gamma_t}W_{0,t})^{s_t}\dfrac{dw}{t^{d+k+1}}\\
\]
which satisfies the induction hypothesis for $|\al|+1=k+1$.~\\
\textbf{Treating $I_2$:} ~\\ For $I_2$, we check the condition \eqref{index} for the new indices and multi-indices, which is 
\[
|\al|\le k+1\qquad \text{and}\quad 1+\beta_1(k_1-1)+(\beta_2k_2+\cdots+\beta_rk_r)+(\gamma_1s_1+\cdots+\gamma_ts_t)\le k+1
\]
The statement $|\al|\le k+1$ is true, as $|\al|\le k$. For the second condition, we note that, by the induction hypothesis:
\[
\lw(\beta_1k_1+\beta_2k_2+\cdots+\beta_rk_r\rw)+\lw(\gamma_1s_1+\cdots+\gamma_ts_t\rw)\le k.
\]
Adding both sides of the above by $1-\beta_1$, the new left hand side can be bounded by $k+1-\beta_1\le k+1$, since $\beta_1\ge 0$. The proof is complete.
Finally, the term $I_3$ is treated exactly as $I_2$, and we skip the details. 
\end{proof}
\begin{theorem}\label{thm1}Assume that 
\[
\max_{0\le k\le N}\|\nabla_{x,v}^{k}f_0\|_{L^1_xL^\infty_v}\le \ep_0\]
and $E, Y_{s,t},W_{s,t}$ satisfy the decay estimates \eqref{E-decay}, \eqref{Y-decay}, and \eqref{W-decay} for $0\le k\le N$. Then 
\[
\mathcal{I}(t,x)=\int_{\R^d}f_0(X_{0,t}(x,v),V_{0,t}(x,v))dv
\]
satisfies the following decay estimate:
\[
\sup_{0\le k\le N}\lw(\la t\ra^k\|\nabla_x^k \mathcal I (t)\|_{L^1}+\la t\ra^{d+k}
\|\nabla_x^k \mathcal I (t)\|_{L^\infty}\rw)\lesssim\eps_0.
\]
\end{theorem}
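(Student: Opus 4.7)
The plan is to invoke Lemma \ref{formI} to reduce $\nabla_x^k \mathcal I(t,x)$ to a finite sum of terms of the form \eqref{term}, and then to bound each such term separately in $L^\infty_x$ and in $L^1_x$. Two simple observations drive the analysis. First, evaluating the decay estimates \eqref{Y-decay}, \eqref{W-decay} at $s=0$ gives $\|\nabla_v^{\beta_i}Y_{0,t}\|_{L^\infty}+\|\nabla_v^{\gamma_j}W_{0,t}\|_{L^\infty}\lesssim\varepsilon$ for every pair of indices appearing in \eqref{term}, so the product of $Y$- and $W$-factors contributes only a harmless power $\varepsilon^{\sum k_i+\sum s_j}\le 1$. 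Second, Liouville's theorem applied to the divergence-free phase-space vector field $(v,E)$ ensures that the characteristic flow $(x,v)\mapsto(X_{0,t}(x,v),V_{0,t}(x,v))$ is measure-preserving, which, combined with the volume-preserving shift $(w,v)\mapsto(w+tv,v)$, shows that the map $(w,v)\mapsto(w+Y_{0,t}(w,v),\,v+W_{0,t}(w,v))$ has Jacobian identically equal to $1$.

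For the $L^\infty$ bound I would fix $x$ and estimate the $f_0$-factor pointwise by its $L^\infty_v$-norm, obtaining
\[
|(\text{term})| \lesssim \varepsilon^p\, t^{-(d+k)} \int_{\R^d} \|\nabla_{x,v}^\al f_0(w+Y_{0,t}(w,(x-w)/t),\cdot)\|_{L^\infty_v}\,dw.
\]
The map $w\mapsto w+Y_{0,t}(w,(x-w)/t)$ is a near-identity $C^1$-diffeomorphism of $\R^d$ with Jacobian uniformly close to $1$, by Lemma \ref{D-Yst} together with Theorem \ref{EYW-decay} (and smallness of $\varepsilon$). Making this change of variables collapses the integral to $\|\nabla_{x,v}^\al f_0\|_{L^1_xL^\infty_v}\le\varepsilon_0$, producing the desired bound $\la t\ra^{-(d+k)}\varepsilon_0$ after the short-time regime is handled by bounding $\mathcal I$ directly through its original definition.

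For the $L^1$ bound I would integrate in $x$ first and change variables $v=(x-w)/t$, so that $dx=t^d\,dv$: the factor $t^{-d}$ cancels, leaving the prefactor $t^{-k}$. The remaining double integral
\[
\iint |\nabla_{x,v}^\al f_0(w+Y_{0,t}(w,v),\, v+W_{0,t}(w,v))|\, |[\mathrm{stuff}(w,v)]|\, dw\, dv
\]
is estimated by bounding $|[\mathrm{stuff}]|$ pointwise by $\varepsilon^p$, and then applying the Jacobian-$1$ change of variables $(w,v)\mapsto(w+Y_{0,t}(w,v),v+W_{0,t}(w,v))$ from the Liouville observation above. This yields $\varepsilon^p\|\nabla_{x,v}^\al f_0\|_{L^1_{x,v}}\lesssim \varepsilon_0$, and summing over the finitely many multi-indices $(\al,\beta_i,k_i,\gamma_j,s_j)$ produced by Lemma \ref{formI} gives the claimed decay.

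The main technical obstacle is to justify that these two changes of variables are genuine diffeomorphisms with controlled Jacobians. For the $L^\infty$ step this rests on the near-identity structure together with Lemma \ref{D-Yst}, which controls $\nabla_x Y_{0,t}$; for the $L^1$ step it rests on Liouville's theorem for the phase-space flow, which gives the exact identity $\det=1$ and frees one from tracking $\varepsilon$-corrections in the Jacobian. Once these are in hand, the estimate for $\mathcal I$ in the theorem follows by summing the finitely many contributions arising from Lemma \ref{formI}.
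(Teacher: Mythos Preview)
Your proposal is correct and follows essentially the same route as the paper. The paper's proof is somewhat more compressed: after bounding the $Y_{0,t}$- and $W_{0,t}$-factors uniformly by $O(\varepsilon)\lesssim 1$, it simply undoes the substitution $w=x-tv$ to recognize each term \eqref{term} as $t^{-k}\int_{\R^d}|\nabla_{x,v}^\al f_0|(X_{0,t}(x,v),V_{0,t}(x,v))\,dv$, i.e.\ exactly $t^{-k}$ times the $k=0$ quantity with $f_0$ replaced by $|\nabla_{x,v}^\al f_0|$, and then invokes the already-established $k=0$ bounds from \cite{T-R-HK}. Your version instead re-derives those $k=0$ bounds explicitly via the near-identity change $w\mapsto w+Y_{0,t}(w,(x-w)/t)$ for $L^\infty$ and the Liouville change $(w,v)\mapsto(w+Y_{0,t},v+W_{0,t})$ for $L^1$; these are precisely the changes of variables underlying the cited $k=0$ estimate, so the two arguments are the same up to whether one quotes or reproves that step.
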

\begin{proof}
By the above lemma, it suffices to prove that 
\[
\la t\ra^k\|\nabla_x^k \mathcal J (t)\|_{L^1}+\la t\ra^{d+k}
\|\nabla_x^k \mathcal J (t)\|_{L^\infty}\lesssim\eps_0.
\]
where 
\[\begin{cases}
&\mathcal J=\int_{\R^d}\nabla_{x,v}^{\al}f_0\cdot (\nabla_v^{\beta_1} Y_{0,t})^{k_1}\cdots (\nabla_v^{\beta_r}Y_{0,t})^{k_r}\cdot (\nabla_v^{\gamma_1} W_{0,t})^{s_1}\cdots(\nabla_v^{\gamma_t}W_{0,t})^{s_t}\dfrac{dw}{t^{d+k}}\\
&1\le |\al|\le k\qquad \text{and}\qquad  (\beta_1k_1+\cdots+\beta_rk_r)+(\gamma_1s_1+\cdots+\gamma_t s_t)\le k.
\end{cases}
\]
We have 
\[
\mathcal J (t,x)\lesssim t^{-k}\int_{\R^d}|\nabla_{x,v}^\al f_0|(X_{0,t}(x,v),V_{0,t}(x,v))dv
\]
Hence, we get 
\[
t^k \|\mathcal J(t)\|_{L^1}+t^{d+k}\|\mathcal J(t)\|_{L^\infty}\lesssim \eps_0.
\]
The proof is complete.
\end{proof}
\subsection{Decay estimates for the reaction term}
In this section, we estimate the derivatives of the reaction term 
\[
\mathcal{R}=\mathcal{R}_{L}-\mathcal{R}_{NL}=
\int_0^t \int_{\R^d} E(s,x-(t-s)v)\cdot\nabla_v\mu(v)dvds
-\int_0^t \int_{\R^d}E(s,X_{s,t}(x,v))\cdot\nabla_v\mu(V_{s,t}(x,v))dvds\]
appearing as a forcing term in \eqref{S}.  For a general time-dependent vector field $E(s)$ and a smooth decaying function $\mu$, we also denote $\mathcal T$ to be 
\beq\label{bi}
\mathcal{T}(E,\mu)=\int_0^t \int_{\R^d} E(s,x-(t-s)v)\cdot\nabla_v\mu(v)dvds
-\int_0^t \int_{\R^d}E(s,X_{s,t}(x,v))\cdot\nabla_v\mu(V_{s,t}(x,v))dvds
\eeq 
Our main theorem is as follows:
\begin{theorem} Let $N>1$ be an integer. Assume that $E, W_{s,t},Y_{s,t}$ satisfy the decay estimates \eqref{E-decay}, \eqref{W-decay} and \eqref{Y-decay} for all $0\le k\le N$, there holds 
\[
\max_{0\le k\le N} \lw(
\la t\ra^k\|\nabla_x^k\mathcal R (t)\|_{L^1}+\la t\ra^{d+k}\|\nabla_x^k \mathcal R (t)\|_{L^\infty}
\rw)\lesssim \eps^2.
\]
\end{theorem}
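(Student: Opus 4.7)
The plan is to exploit the cancellation structure $\mathcal R = \mathcal R_L - \mathcal R_{NL}$ of the bilinear form $\mathcal T(E,\mu)$ defined in \eqref{bi}. Using the representations $X_{s,t}(x,v) = x-(t-s)v + Y_{s,t}(x-tv,v)$ and $V_{s,t}(x,v) = v + W_{s,t}(x-tv,v)$, the fundamental theorem of calculus in a parameter $\tau\in[0,1]$ yields
\[
\mathcal R(t,x) = -\int_0^t\!\!\int_{\R^d}\!\!\int_0^1\left[(\nabla_xE)(s,X^\tau_{s,t})\cdot Y_{s,t}\,\nabla_v\mu(V^\tau_{s,t}) + E(s,X^\tau_{s,t})\cdot\nabla_v^2\mu(V^\tau_{s,t})\cdot W_{s,t}\right]d\tau\,dv\,ds,
\]
where $X^\tau_{s,t}=x-(t-s)v+\tau Y_{s,t}$ and $V^\tau_{s,t}=v+\tau W_{s,t}$. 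Every term manifestly carries one $\eps$ from $E$ and a second $\eps$ from $Y_{s,t}$ or $W_{s,t}$, yielding the $\eps^2$ size. This is essentially the $k=0$ case already treated in \cite{T-R-HK}.

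For $k\ge 1$, I would establish an analog of Lemma \ref{formI} (namely, the anticipated Lemma \ref{form}): $\nabla_x^k\mathcal R(t,x)$ can be written as a finite sum of terms of the schematic form
\[
\int_0^t\!\!\int_{\R^d}\!\!\int_0^1 (\nabla_x^a E)(s,X^\tau_{s,t})\,(\nabla_v^b\mu)(V^\tau_{s,t})\,\prod_i(\nabla^{\beta_i}Y_{s,t})^{m_i}\prod_j(\nabla^{\gamma_j}W_{s,t})^{n_j}\cdot Z_{s,t}\,d\tau\,dv\,ds,
\]
where $Z_{s,t}\in\{Y_{s,t},W_{s,t}\}$ is the Taylor remainder factor (and $\nabla$ stands for a derivative in either $x$ or $v$), subject to a multi-index constraint that the total number of extra derivatives is $k$. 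The proof proceeds by induction on $k$ using the Faa di Bruno formula (Lemma \ref{chain}): each application of $\partial_{x_i}$ either bumps the derivative order of a factor $E,\mu,Y,W$, or produces new product factors through the chain rule acting on $X^\tau_{s,t}$ or $V^\tau_{s,t}$, while $Z_{s,t}$ is carried through (and possibly differentiated) but is never lost.

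To bound each such term, I would perform the dispersive change of variables $w=x-(t-s)v$, giving $dv=dw/(t-s)^d$, and use the straightening map of Lemma \ref{straighten} to handle the nonlinear arguments $X^\tau_{s,t},V^\tau_{s,t}$. The estimates \eqref{E-decay}, \eqref{Y-decay}, \eqref{W-decay} and the decaying bound \eqref{bound} on $\mu$ then furnish explicit sizes for each factor. For the $L^\infty$ bound, split the $s$-integral into $[0,t/2]$ and $[t/2,t]$: on $[t/2,t]$ the time decay of $\nabla_x^aE$ gives $\la s\ra^{-(d+a)}\sim t^{-(d+a)}$, while on $[0,t/2]$ the dispersive factor $(t-s)^{-(d+a)}\sim t^{-(d+a)}$ supplies the $t$-decay; the extra factor $\eps\log(1+s)/\la s\ra^{d-2}$ from $Z_{s,t}$ renders the resulting $s$-integrals convergent in dimensions $d\ge 3$. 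The $L^1$ bound proceeds analogously with $\|\nabla_x^aE\|_{L^1}\lesssim\eps\log(1+s)/\la s\ra^a$, the $t^{-k}$ gain similarly arising from the combined dispersive change of variables and the time decay of $Z_{s,t}$. Any logarithmic losses are absorbed into the constant for $\eps$ sufficiently small.

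The principal obstacle is the combinatorial bookkeeping required in the decomposition lemma. One must verify that (i) at every stage of the induction each term retains exactly one Taylor-remainder factor $Z_{s,t}\in\{Y_{s,t},W_{s,t}\}$, and (ii) the multi-index constraints are preserved under each $\partial_{x_i}$. A further subtlety is that $Y_{s,t},W_{s,t}$ depend on $x$ through the shifted argument $x-tv$, so $\partial_x$ acting on them can, after the dispersive change of variables $w=x-(t-s)v$, behave like $\partial_v/t$; this interplay between $\nabla_x$ and $\nabla_v$ derivatives must be tracked consistently so that no powers of $t$ are lost in the final bounds.
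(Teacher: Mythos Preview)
Your approach differs from the paper's: you Taylor-expand $\mathcal R$ in a parameter $\tau$ to extract an explicit small factor $Z_{s,t}\in\{Y_{s,t},W_{s,t}\}$, whereas the paper keeps the bilinear structure $\mathcal T(E,\mu)$ intact and derives a recursion (Lemma~\ref{chain-T}) of the form
\[
\partial_j\mathcal T(E,\mu)=\tfrac{1}{t}\bigl[\mathcal T(s\partial_jE,\mu)+\mathcal T(E,\partial_j\mu)\bigr]+(\text{product terms}),
\]
so that after $n$ iterations one has $\nabla_x^n\mathcal R=t^{-n}\cdot(\text{Type-I}+\text{Type-II})$ with the prefactor $t^{-n}$ explicit from the outset. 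Type-I terms $\mathcal T(s^k\nabla_x^kE,f(\mu))$ are then bounded by Proposition~\ref{assume} via the straightening map, without ever Taylor-expanding $E$; Type-II terms are products already carrying a factor $\nabla_v^mY_{s,t}$ or $\nabla_v^mW_{s,t}$, with carefully tracked index conditions.

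The gap in your proposal is the origin of the factor $t^{-k}$. You attribute it to the change of variables $w=x-(t-s)v$ together with the $s$-decay of $\nabla_x^aE$ and $Z_{s,t}$, but neither produces decay in $t$: the Jacobian contributes only $(t-s)^{-d}$ (not $(t-s)^{-(d+a)}$), and decay in $s$ is not decay in $t$. If you differentiate your Taylor representation directly in $x$, the leading term after $k$ derivatives is $\nabla_x^{k+1}E\cdot Y_{s,t}\cdot\nabla_v\mu$, whose $L^1$ norm integrates in $s$ to $O(\eps^2)$ with no $t^{-k}$ whatsoever. The correct mechanism---which your final paragraph almost identifies but with the wrong substitution---is to pass to $w=x-tv$ (not $w=x-(t-s)v$) \emph{before} differentiating: since $Y_{s,t},W_{s,t}$ are evaluated at $(x-tv,v)=(w,(x-w)/t)$, this is the unique substitution under which every $\partial_x$ becomes $t^{-1}$ times either $\partial_v$ (acting on $Y,W,\mu$) or $s\,\partial_x$ (acting on $E$). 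Your structural lemma must then carry $t^{-k}$ explicitly, together with index constraints analogous to the paper's ``$E$-decay'' and ``$W$--$E$ decay'' conditions in Lemma~\ref{form}. A secondary issue is that your Taylor expansion already contains $\nabla_xE$, so at top order $k=N$ the worst term requires $\nabla_x^{N+1}E$, one derivative beyond hypothesis~\eqref{E-decay}; the paper avoids this precisely because its Type-I terms retain the $\mathcal T$-structure and are handled by Proposition~\ref{assume} without differentiating $E$.
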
~\\
First, we recall the following proposition from \cite{T-R-HK}. We also give a detailed proof for the readers convenience.
\begin{proposition}\label{assume}
Assuming that $E,Y_{s,t},W_{s,t}$ satisfies the decaying estimates \eqref{E-decay},\eqref{Y-decay} and \eqref{W-decay} respectively, we have 
\[
\|\mathcal T (E,\mu)\|_{L^1}+\la t\ra^d\|\mathcal T (E,\mu)\|_{L^\infty}\lesssim \eps^2.
\]
\end{proposition}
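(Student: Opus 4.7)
The plan is to exploit the fact that $\mathcal T$ is the difference between free-transport and true trajectories, so that each summand comes with a small factor of $Y_{s,t}$ or $W_{s,t}$. Using \eqref{rep-char} I would write $\mathcal T = A + B$ with
\[
A = \int_0^t\int_{\R^d}[E(s,x-(t-s)v)-E(s,X_{s,t})]\cdot\nabla_v\mu(v)\,dv\,ds,
\]
\[
B = \int_0^t\int_{\R^d}E(s,X_{s,t})\cdot[\nabla_v\mu(v)-\nabla_v\mu(V_{s,t})]\,dv\,ds,
\]
and apply the fundamental theorem of calculus (writing $Y_{s,t}$ for $Y_{s,t}(x-tv,v)$ and similarly for $W_{s,t}$) to obtain
\[
A = -\int_0^t\!\!\int_0^1\!\!\int_{\R^d}\nabla_x E(s, x-(t-s)v+\theta Y_{s,t})\cdot Y_{s,t}\,\nabla_v\mu(v)\,dv\,d\theta\,ds,
\]
\[
B = -\int_0^t\!\!\int_0^1\!\!\int_{\R^d}E(s,X_{s,t})\cdot\nabla_v^2\mu(v+\theta W_{s,t})\,W_{s,t}\,dv\,d\theta\,ds.
\]
Each summand now factors as (a derivative of $E$ or $\mu$) times (a small perturbation).

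For the $L^1_x$ bound I would push the $L^1_x$ norm inside. Because Lemma \ref{D-Yst} gives $\|\nabla_x Y_{s,t}\|_{L^\infty}=O(\eps)$, the map $x\mapsto x-(t-s)v+\theta Y_{s,t}(x-tv,v)$ has Jacobian bounded away from $0$, so the $L^1_x$ norm of the shifted $\nabla_x E$ is controlled by $\|\nabla_x E(s)\|_{L^1}$. Together with \eqref{bound} this yields
\[
\|A(t)\|_{L^1}\lesssim \int_0^t \|Y_{s,t}\|_{L^\infty}\|\nabla_x E(s)\|_{L^1}\|\nabla_v\mu\|_{L^1}\,ds,
\]
\[
\|B(t)\|_{L^1}\lesssim \int_0^t \|W_{s,t}\|_{L^\infty}\|E(s)\|_{L^\infty}\|\nabla_v^2\mu\|_{L^1}\,ds,
\]
and substituting \eqref{E-decay}, \eqref{Y-decay}, \eqref{W-decay} produces $s$-integrands of order $\eps^2\log^2(1+s)/\langle s\rangle^{d-1}$ and $\eps^2\log^2(1+s)/\langle s\rangle^{2d-1}$ respectively, both integrable for $d\ge 3$; hence $\|\mathcal T(t)\|_{L^1}\lesssim\eps^2$.

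For the $L^\infty_x$ bound I would expose the free-transport dispersion by changing variables $w=x-(t-s)v$ (with Jacobian $(t-s)^{-d}$) and split the $s$-integral into $[0,t/2]$ and $[t/2,t]$. On $[0,t/2]$, where $t-s\sim t$, I estimate $|\nabla_v\mu|\le \|\nabla_v\mu\|_{L^\infty}$ and use the $L^1$-norm of $\nabla_x E$ to get an integrand $\lesssim \eps^2\log^2(1+s)/[\langle s\rangle^{d-1}t^d]$, whose $s$-integral is $\lesssim \eps^2/\langle t\rangle^d$ for $d\ge 3$. On $[t/2,t]$, where $\langle s\rangle\sim\langle t\rangle$, I bound $|\nabla_x E|\le\|\nabla_x E(s)\|_{L^\infty}$ and let the $w$-integral of $\nabla_v\mu$ eat the Jacobian factor to produce $\|\nabla_v\mu\|_{L^1}$; this gives an integrand $\lesssim \eps^2\log^2(1+s)/\langle s\rangle^{2d-1}$, and its integral is $\lesssim \eps^2\log^2(t)/\langle t\rangle^{2d-2}\lesssim\eps^2/\langle t\rangle^d$ since $2d-2\ge d$ for $d\ge 3$. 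The term $B$ is handled identically, with $W_{s,t}$ (which decays faster than $Y_{s,t}$) replacing $Y_{s,t}$ and $\nabla_v^2\mu$ replacing $\nabla_x E$, which makes the estimate even more favorable.

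The main obstacle is keeping the Jacobians of all these changes of variables bounded and invertible, which boils down to $\|\nabla_x Y_{s,t}\|_{L^\infty}$ and $\|\nabla_v Y_{s,t}\|_{L^\infty}$ being $O(\eps)$, afforded by Lemma \ref{D-Yst} and the $k=1$ case of Theorem \ref{EYW-decay}. A secondary delicacy is the trade-off between $L^1$ and $L^\infty$ norms of $\nabla_x E$ versus $\nabla_v\mu$ in the two time regimes, arranged precisely to extract the full $\langle t\rangle^{-d}$ decay; the log losses in \eqref{E-decay}--\eqref{W-decay} are always absorbed by the remaining polynomial factor as long as $d\ge 3$.
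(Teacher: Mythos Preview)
Your strategy is sound and genuinely different from the paper's. Instead of your mean-value decomposition $\mathcal T=A+B$, the paper first applies the straightening diffeomorphism $v\mapsto\Psi_{s,t}(x,v)$ of Lemma~\ref{straighten} to the nonlinear piece so that both integrals have $E$ evaluated at the \emph{same} point $x-(t-s)v$; the difference then splits into a term measuring $\mu(v)-\mu(V_{s,t}(x,\Psi_{s,t}))$ and a Jacobian term $1-\det(\nabla_v\Psi_{s,t})$. The payoff of the paper's route is that no derivative of $E$ appears and no $[0,t/2]\cup[t/2,t]$ splitting is needed: the $L^1$ and $L^\infty$ bounds follow directly from the translation-invariance of $\|E(s,\cdot-(t-s)v)\|_{L^p_x}$. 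Your route is more elementary (it bypasses Lemma~\ref{straighten} entirely) but costs one extra derivative on $E$ in the $A$-term and forces the time-splitting to recover the $\langle t\rangle^{-d}$ decay; since \eqref{E-decay} provides that extra derivative and the Jacobian of $v\mapsto x-(t-s)v+\theta Y_{s,t}$ is controlled on $[0,t/2]$ by $t\|\nabla_x Y_{s,t}\|_{L^\infty}/(t-s)=O(\eps)$, this works.

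One correction: your displayed bound $\|B(t)\|_{L^1}\lesssim\int_0^t\|W_{s,t}\|_{L^\infty}\|E(s)\|_{L^\infty}\|\nabla_v^2\mu\|_{L^1}\,ds$ cannot be right as written, since the right-hand side contains no $L^1_x$ factor. You need $\|E(s)\|_{L^1}$ there (obtained, e.g., via the measure-preserving change $(x,v)\mapsto(X_{s,t},V_{s,t})$ or by shifting in $x$ as in your treatment of $A$); the resulting integrand $\eps^2\log^2(1+s)/\langle s\rangle^{d-1}$ is still integrable for $d\ge3$. Note also that both your argument and the paper's tacitly use $L^1$ decay of $E$ and $\nabla_xE$, which is not literally contained in the $L^\infty$ hypothesis \eqref{E-decay} but is available from the preceding theorem on $\rho$.
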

\begin{proof}
Making the change of variables $v\to \Psi_{s,t}(x,v)$ so that $X_{s,t}(x,\Psi_{s,t}(x,v))=x-(t-s)v$ (see Lemma \ref{straighten}),
we have 
\[\bega
\mathcal T(E,\mu)&=\int_0^t \int_{\R^d}E(s,x-(t-s)v)\cdot\nabla_v\mu(v)dvds-\int_0^t \int_{\R^d}E(s,X_{s,t}(x,v))\cdot\nabla_v\mu(V_{s,t}(x,v))dvds\\
&=\int_0^t \int_{\R^d}E(s,x-(t-s)v)\cdot\nabla_v\mu(v)dvds\\
&\quad-\int_0^t\int_{\R^d}E(s,x-(t-s)v)\cdot\nabla_v\mu(V_{s,t}(x,\Psi_{s,t}(x,v))\det(\nabla_v \Psi_{s,t}(x,v))dvds
\enda
\]
Hence, one can rewrite $\mathcal T$ as $\mathcal T_1+\mathcal T_2$, where 
\[\bega 
\mathcal T_1&=\int_0^t\int_{\R^d}E(s,x-(t-s)v)\cdot\nabla_v \lw\{\mu(v)-\mu(V_{s,t}(x,\Psi_{s,t}(x,v)))\rw\}dvds\\
\mathcal T_2&=\int_0^t\int_{\R^d}E(s,x-(t-s)v)\cdot\nabla_v\mu(V_{s,t}(x,\Psi_{s,t}(x,v))\lw\{1-\det(\nabla_v\Psi_{s,t}(x,v))
\rw\}dvds
\enda 
\]
\textbf{Bounding} $\mathcal T_1(t)$:~\\
We have 
\[\bega
\mathcal T_1(t,x)&=\int_0^t\int_{\R^d}E(s,x-(t-s)v)\cdot\nabla_v\lw\{\mu(v)-\mu(V_{s,t}(x,\Psi_{s,t}(x,v)))\rw\}dvds\\
&\lesssim \int_0^t \int_{\R^d}|E(s,x-(t-s)v)\cdot|\la v\ra^{-M}\cdot|v-V_{s,t}(x,\Psi_{s,t}(x,v))|dvds\\
&\lesssim \int_0^t \int_{\R^d}|E(s,x-(t-s)v)|\la v\ra^{-M}\lw\{|v-V_{s,t}(x,v)|+|V_{s,t}(x,v)-V_{s,t}(x,\Psi_{s,t}(x,v))|\rw\}dvds\\
&\lesssim \int_0^t \int_{\R^d}|E(s,x-(t-s)v)|\la v\ra^{-M}\lw\{|v-V_{s,t}(x,v)|_{L^\infty}+\|\nabla_v V_{s,t}\|_{L^\infty}|v-\Psi_{s,t}(x,v)|\rw\}dvds
\enda \]
Using the fact that 
\beq \label{bound-char}
|v-V_{s,t}(x,v)|\lesssim \frac{\eps\log(1+s)}{(1+s)^{d-1}}, \quad |v-\Psi_{s,t}(x,v)|\lesssim \frac{\eps\log(1+s)}{(1+s)^d}\quad \text{and}\quad \|\nabla_v V_{s,t}\|_{L^\infty}\lesssim 1,
\eeq 
we get 
\[
\mathcal T_1(t,x)\lesssim \int_0^t \int_{\R^d}|E(s,x-(t-s)v)|\la v\ra^{-M}\lw(\frac{\eps\log(1+s)}{(1+s)^{d-1}}+\frac{\eps\log(1+s)}{(1+s)^{d}}
\rw)dvds.
\]
Hence 
\[\bega 
\|\mathcal T_1(t)\|_{L^1}&\lesssim \int_0^t\frac{\eps\log(1+s)}{(1+s)^{d-1}}\int_{\R^d}\|E(s)\|_{L^1}\la v\ra^{-M}dv\\
&\lesssim \int_0^t \frac{\eps^2\log(1+s)^2}{(1+s)^{d-1}}ds\lesssim \eps^2,
\enda 
\]
and 
\[\bega 
\|\mathcal T_1(t)\|_{L^\infty}&\lesssim \int_0^t\int_{\R^d} \|E(s)\|_{L^\infty}\la v\ra^{-M}\lw(\frac{\eps\log(1+s)}{(1+s)^{d-1}}+\frac{\eps\log(1+s)}{(1+s)^{d}}
\rw)dvds\\
&\lesssim \int_0^t\frac{\eps^2\log(1+s)^2}{(1+s)^{2d-1}}ds\lesssim \eps^2\log(1+t)\int_0^t \frac{\log(1+s)}{(1+s)^{2d-1}}ds\\
&\lesssim \eps^2 \frac{\log(1+t)^2}{(1+t)^{2d-2}}\lesssim \eps^2\la t\ra^{-d}.
\enda
\]
Thus $$\|\mathcal T_1(t)\|_{L^1}+\la t\ra^d\|\mathcal T_1(t)\|_{L^\infty}\lesssim \eps^2.$$~\\
\textbf{Bounding} $\mathcal T_2(t)$:~\\
Since $v\to \Psi_{s,t}(x,v)$ is a differomorphism, making the change of variables $\Psi_{s,t}^{-1}:\Psi_{s,t}(x,v)\to v$ gives 
\[\bega
\mathcal T_2(t,x)&=\int_0^t\int_{\R^d}E(s,x-(t-s)v)\cdot\nabla_v\mu(V_{s,t}(x,v))(1-\det(\nabla_v\Psi_{s,t}(x,v))\det(\nabla_v(\Psi_{s,t}^{-1}(x,v)))dvds\\
&\lesssim \int_0^t \int_{\R^d}|E(s,x-(t-s)v)|\cdot |\nabla_v\mu|\lw(V_{s,t}(x,v)\rw)\cdot\lw|\det(\nabla_v\Psi_{s,t}(x,v))^{-1}-1\rw|dvds
\enda
\]
Using the inequality $|\nabla_v(\Psi_{s,t}(x,v)-v)|\lesssim \frac{\eps\log(1+s)}{(1+s)^{d-1}}$, we have 
\[
\mathcal T_2(t,x)\lesssim \int_0^t\int_{\R^d}|E(s,x-(t-s)v)|\cdot|\nabla_v\mu|(V_{s,t}(x,v))\cdot\frac{\eps\log(1+s)}{(1+s)^{d-1}}dsdv
\]
Hence we have 
\[\bega
\|\mathcal T_2(t)\|_{L^1}&\lesssim \int_0^t\lw\{ \eps\frac{\log(1+s)}{(1+s)^{d-1}}\|E(s)\|_{L^1}\int_{\R^d}\sup_{x\in \R^d}|\nabla_v\mu|(V_{s,t}(x,v))dv\rw\}ds\\
&\lesssim \int_0^t \eps^2\frac{\log(1+s)^2}{(1+s)^d}ds\lesssim \eps^2.
\enda
\]
and 
\[\bega 
\|\mathcal T_2(t)\|_{L^\infty}&\lesssim \int_0^t \|E(s)\|_{L^\infty}\cdot\frac{\eps\log(1+s)}{(1+s)^{d-1}}\sup_{x\in\R^d}\int_{\R^d}|\nabla_v\mu|(V_{s,t}(x,v))dvds\\
&\lesssim \int_0^t \eps^2\frac{\log(1+s)^2}{(1+s)^{2d-1}}ds\lesssim \eps^2\log(1+t)\int_0^t\frac{\log(1+s)}{(1+s)^{2d-1}}ds\lesssim \eps^2\frac{\log(1+t)}{(1+t)^{2d-2}}\\
&\lesssim \eps^2\la t\ra^{-d}
\enda
\]
This implies that 
\[
\|\mathcal T_2(t)\|_{L^1}+\la t\ra^d\|\mathcal T_2(t)\|_{L^\infty}\lesssim \eps^2.
\]
The proof is complete.
\end{proof}
\begin{lemma}\label{chain-T}
There holds 
\[\bega
\pt_j \mathcal{T}(E,\mu)&=\frac{1}{t}\lw(\mathcal{T}(s\pt_j E,\mu)+\mathcal{T}(E,\pt_j \mu)\rw)\\
&+\frac{1}{t}\sum_{k=1}^d \int_0^t \int_{\R^d}\pt_{v_j}Y_{s,t}(x-tv,v)\cdot\nabla_x E_k(s,X_{s,t}(x,v))(\pt_k\mu)(V_{s,t}(x,v))dvds\\
&+\frac{1}{t}\sum_{k=1}^d\int_0^t\int_{\R^d}E_k(s,X_{s,t}(x,v))(\pt_{v_j}W_{s,t})(x-tv,v)\cdot\nabla_v (\pt_k\mu)(V_{s,t}(x,v))dvds. \\
\enda 
\]
\end{lemma}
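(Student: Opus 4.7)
The plan is to prove Lemma \ref{chain-T} by exploiting a clean algebraic relation between $\partial_{x_j}$ and $\partial_{v_j}$ acting on the characteristics. The central identity I would establish first is
\[
t\,\partial_{x_j} X_{s,t}^k(x,v) + \partial_{v_j} X_{s,t}^k(x,v) = s\,\delta_{jk} + (\partial_{v_j} Y_{s,t}^k)(x-tv,v),
\]
\[
t\,\partial_{x_j} V_{s,t}^k(x,v) + \partial_{v_j} V_{s,t}^k(x,v) = \delta_{jk} + (\partial_{v_j} W_{s,t}^k)(x-tv,v).
\]
Both follow from direct differentiation of the representations $X_{s,t}(x,v)=x-(t-s)v+Y_{s,t}(x-tv,v)$ and $V_{s,t}(x,v)=v+W_{s,t}(x-tv,v)$: the chain-rule contribution from the first argument $x-tv$ of $Y_{s,t}$ (respectively $W_{s,t}$) picks up a factor of $1$ under $\partial_{x_j}$ and a factor of $-t$ under $\partial_{v_j}$, so they cancel in the combination $t\,\partial_{x_j}+\partial_{v_j}$, leaving only the explicit $x-(t-s)v$ contribution (giving $s\,\delta_{jk}$) together with the second-argument derivative. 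Solving for $\partial_{x_j}$ gives
\[
\partial_{x_j} X_{s,t}^k = \tfrac{1}{t}\!\left[s\delta_{jk}+(\partial_{v_j}Y_{s,t}^k)(x-tv,v)-\partial_{v_j}X_{s,t}^k\right],
\]
and analogously for $V_{s,t}^k$. The same identity holds for the free-streaming variable $x-(t-s)v$, with the $Y,W$ corrections replaced by zero.

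Second, I would split $\mathcal{T}(E,\mu)=\mathcal{A}-\mathcal{B}$ where $\mathcal{A}$ and $\mathcal{B}$ are the linear and nonlinear integrals, and compute $\partial_{x_j}$ of each via the chain rule. Applying the identity to the integrand $E_k(s,X_{s,t})(\partial_k\mu)(V_{s,t})$ of $\mathcal{B}$, each factor $\partial_{x_j}X_{s,t}^m$ and $\partial_{x_j}V_{s,t}^\ell$ splits into three $\tfrac{1}{t}$-weighted pieces: a shift contribution involving $s(\partial_j E_k)$ or $(\partial_j\partial_k\mu)$; a correction of the form $(\partial_{v_j}Y_{s,t})\cdot\nabla_x E_k$ or $(\partial_{v_j}W_{s,t})\cdot\nabla_v(\partial_k\mu)$; and a $-\partial_{v_j}$ piece. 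The critical observation is that the two $-\partial_{v_j}$ pieces (one from each factor) recombine via the Leibniz rule into $-\tfrac{1}{t}\partial_{v_j}\!\left[E_k(s,X_{s,t})(\partial_k\mu)(V_{s,t})\right]$, a total $v$-divergence that integrates to zero on $\R^d$ thanks to the decay \eqref{bound} of $\nabla_v\mu$.

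The analogous computation for $\mathcal{A}$ is simpler since no $Y,W$ corrections appear; after the same reassembly trick the residual total $v$-derivative drops, yielding
\[
\partial_{x_j}\mathcal{A} = \tfrac{1}{t}\!\int_0^t\!\!\int_{\R^d}\! s(\partial_j E)(s,x-(t-s)v)\!\cdot\!\nabla_v\mu(v)\,dv\,ds + \tfrac{1}{t}\!\int_0^t\!\!\int_{\R^d}\! E(s,x-(t-s)v)\!\cdot\!\nabla_v(\partial_j\mu)(v)\,dv\,ds.
\]
Forming $\partial_{x_j}\mathcal{T}=\partial_{x_j}\mathcal{A}-\partial_{x_j}\mathcal{B}$, the shift pieces from $\mathcal{A}$ and $\mathcal{B}$ reassemble into $\tfrac{1}{t}[\mathcal{T}(s\partial_j E,\mu)+\mathcal{T}(E,\partial_j\mu)]$, and the two $Y$ and $W$ correction integrals that appear only in $\partial_{x_j}\mathcal{B}$ are precisely those stated in the lemma.

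The main obstacle is spotting the identity in the first step; once it is on the table, the rest is chain rule plus a single integration by parts in $v$. The only subsidiary point requiring care is the vanishing of the total $v$-divergence boundary term, which is immediate because $\nabla_v\mu$ decays faster than any polynomial by \eqref{bound} while $E(s,\cdot)$ and the characteristic maps stay bounded on $\R^d$.
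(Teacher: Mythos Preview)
Your argument is correct. The key identity
\[
t\,\partial_{x_j} X_{s,t}^k + \partial_{v_j} X_{s,t}^k = s\,\delta_{jk} + (\partial_{v_j} Y_{s,t}^k)(x-tv,v),
\qquad
t\,\partial_{x_j} V_{s,t}^k + \partial_{v_j} V_{s,t}^k = \delta_{jk} + (\partial_{v_j} W_{s,t}^k)(x-tv,v)
\]
is exactly right, and the subsequent splitting into shift, correction, and total-$v$-derivative pieces, followed by one integration by parts in $v$, yields the statement cleanly.

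The paper takes the dual route: it first performs the change of variables $w=x-tv$ in the integrals defining $\mathcal R_L$ and $\mathcal R_{NL}$, so that $x$ enters the integrand only through $\frac{x-w}{t}$ and $\frac{s}{t}(x-w)$; differentiating in $x$ then immediately produces the factor $\tfrac{1}{t}$ and the various pieces, after which one undoes the substitution. Your operator identity $t\,\partial_{x_j}+\partial_{v_j}$ is precisely the infinitesimal version of that change of variables (it annihilates functions of $x-tv$), and your integration by parts in $v$ plays the role of the Jacobian bookkeeping that the paper absorbs into the substitution. So the two arguments are the same mechanism in different clothing: the paper's is perhaps quicker to write down, while yours makes the cancellation structure (the total $v$-derivative) completely explicit and avoids ever leaving the $(x,v)$ coordinates.
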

\begin{proof} 
We recall that
\[
\mathcal{R}_{NL}=\int_0^t \int_{\R^d}E(s,X_{s,t}(x,v))\cdot\nabla_v \mu(V_{s,t}(x,v))dvds.
\]
Using the identities \eqref{rep-char}, we have
\[
\mathcal R_{NL}=\int_0^t \int_{\R^d}E(s,x-(t-s)v+Y_{s,t}(x-tv,v))\cdot\nabla_v \mu(v+W_{s,t}(x-tv,v))dvds.
\]
Making the change of variable $w=x-tv$, we obtain
\[
\bega 
\mathcal R_{NL}&=\int_0^t\int_{\R^d}E\lw(s,w+\frac{s}{t}(x-w)+Y_{s,t}(w,\frac{x-w}{t})\rw)\cdot
\nabla_v \mu\lw(\frac{x-w}{t}+W_{s,t}(w,\frac{x-w}{t})\rw)
t^{-d}dwds\\
&=t^{-d}\sum_{k=1}^d\int_0^t \int_{\R^d}E_k\lw(s,w+\frac{s}{t}(x-w)+Y_{s,t}(w,\frac{x-w}{t})\rw)\pt_k \mu\lw(\frac{x-w}{t}+W_{s,t}(w,\frac{x-w}{t})\rw)
dwds\enda
\]
Similarly, for $\mathcal R_L$ we have 
\[
\bega 
\mathcal{R}_L=\int_0^t \int_{\R^d}E(s,x-(t-s)v)\cdot\nabla_v\mu(v)dvds
=t^{-d}\sum_{k=1}^d\int_0^t \int_{\R^d}E_k\lw(s,w+\frac{s}{t}(x-w)\rw)\pt_k\mu\lw(\frac{x-w}{t}\rw)dwds\\
\enda 
\]
The lemma follows by a direct calculation. The proof is complete.
\end{proof}
Now we establish the following lemma, by induction on the degree of derivatives.
\begin{lemma}\label{form} Let $n\ge 2$, then $\nabla_x^n \mathcal R(t,x)$ can be written as a sum of terms, are all either of the form 
\[
\frac{1}{t^n}\mathcal T(s^k\nabla_x^k E, f(\mu))
\]
or the form 
\[\bega
\frac{1}{t^n}\int_0^t\int_{\R^d}&\lw\{(\nabla_v^{m_1}Y_{s,t})^{k_1}\cdots (\nabla_v^{m_a}Y_{s,t})^{k_a}\rw\} \cdot \lw\{(\nabla_v^{n_1}W_{s,t})^{l_1}\cdots (\nabla_v^{n_b}W_{s,t})^{l_b}\rw\}\\
&\times \lw\{(s^{t_1}\nabla_x^{u_1}E)\cdots (s^{t_c}\nabla_x^{u_c}E)
\rw\}f(\mu)dvds
\enda 
\]
where $f(\mu)$ is some expression only depending on $\mu(v)$ or its derivatives. Moreover, the indices satisfy the following set of conditions: 
\begin{itemize}
\item{No loss of derivative condition: $k\le n$,  $\max\{\{m_1,k_1,\cdots, m_a,k_a\}\cup\{n_1,l_1,\cdots, n_b,l_b\}\cup\{t_1,u_1,\cdots, t_c,u_c\}\}\le n$.}
\item{E-decay condition: $(t_1,t_2,\cdots, t_c)\le (u_1,u_2,\cdots, u_c).$}
\item{Y-W show-up condition: $\min \{a,b\} \ge 1$.}
\item{E-show up condition: $c\ge 1$.}
\item{$W$-$E$ decay condition: If  $b=0$ then  $t_h+1\le u_h$ for some $1\le h \le c$.}
\end{itemize}
\end{lemma}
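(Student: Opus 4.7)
The approach is to induct on $n \ge 2$, with the base case $n = 2$ handled by iterating Lemma~\ref{chain-T} twice and the inductive step driven by a single application of $\partial_{x_j}$ to each summand in the classification at level $n$. The computational engine is the change of variables $w = x - tv$ used in the proof of Lemma~\ref{chain-T}: after this change, the variable $x$ enters the integrand only through the argument $\tfrac{x-w}{t}$ of the functions $Y_{s,t}$, $W_{s,t}$, the shift in $E$, and the shift in $\mu$. Consequently each $\partial_{x_j}$ automatically produces a factor of $t^{-1}$ together with a $v$-type derivative acting on exactly one of these factors. The structural alternatives for that derivative are: (i) $s\,\partial_j E$ (hitting the shift inside $E$); (ii) $(\partial_{v_j}Y_{s,t})\cdot\nabla_x E$ (hitting the $Y_{s,t}$-correction inside the argument of $E$); (iii) $\partial_j \mu$ (hitting the shift of $\mu$); (iv) $(\partial_{v_j} W_{s,t})\cdot \nabla_v \mu$ (hitting the $W_{s,t}$-correction inside the argument of $\mu$); or the bare bump $\nabla_v^{m}Y_{s,t} \to \nabla_v^{m+1}Y_{s,t}$ (and similarly for $W_{s,t}$) when $\partial_j$ lands on a pre-existing $Y$- or $W$-factor.

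For $n = 2$ one applies Lemma~\ref{chain-T} to get $\partial_j \mathcal{R}$ as a sum of two first-form pieces $\frac{1}{t}\mathcal{T}(s\partial_j E,\mu)$ and $\frac{1}{t}\mathcal{T}(E,\partial_j\mu)$, together with two correction integrals of second-form type; then $\partial_i$ is applied to each, via Lemma~\ref{chain-T} for the $\mathcal{T}$-pieces and via the product rule combined with (i)--(iv) for the corrections, and every resulting term fits one of the two claimed forms. The same pattern drives the induction step: for a first-form term at level $n$, invoking Lemma~\ref{chain-T} with $E \to s^k \nabla_x^k E$ and $\mu \to f(\mu)$ yields $\frac{1}{t^{n+1}}\mathcal{T}(s^{k+1}\nabla_x^{k+1}E, f(\mu))$, $\frac{1}{t^{n+1}}\mathcal{T}(s^k\nabla_x^k E, \partial_j f(\mu))$, and two second-form correction integrals carrying one $E$-factor and a single $Y$- or $W$-factor; for a second-form term, $\partial_j$ distributes by the product rule and each factor is transformed by one of (i)--(iv), producing a finite sum of second-form terms at level $n+1$.

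The \emph{no loss of derivative} condition is immediate since each $\partial_j$ raises exactly one index by one, so every index remains bounded by $n+1$. The \emph{$E$-decay} inequality $t_h \le u_h$ is preserved factor by factor: applying $\partial_j$ to $s^{t_h}\nabla_x^{u_h}E$ produces either $s^{t_h+1}\nabla_x^{u_h+1}E$ (slack preserved) or a new $\nabla_v Y_{s,t}$ factor together with $s^{t_h}\nabla_x^{u_h+1}E$ (strict improvement). The \emph{show-up} conditions hold because second-form terms are born only through Lemma~\ref{chain-T}, which always introduces at least one $E$-factor together with a $Y$- or $W$-factor, and further differentiation only adds factors. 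The \emph{$W$-$E$ decay} condition (relevant when no $W$-factor is present) is the subtlest: one must track which of alternatives (i)/(ii) was chosen for each $E$-factor across every branch of the induction tree, and show that in the absence of a $W$-factor at least one $E$-factor has been differentiated via (i), yielding $t_h + 1 \le u_h$. This combinatorial bookkeeping to propagate the five coupled index constraints simultaneously is the real obstacle: the differentiation itself is mechanical, but stating the classification at exactly the right level of generality so that it closes under differentiation is what makes the lemma delicate.
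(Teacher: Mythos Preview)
Your proposal is correct and follows essentially the same route as the paper: induction on $n$, invoking Lemma~\ref{chain-T} to differentiate type-I terms (producing two new type-I terms and two type-II correction integrals), and using the change of variables $w=x-tv$ together with the product rule to differentiate type-II terms. One small slip in your final paragraph: the strict slack $t_h+1\le u_h$ in the $W$--$E$ decay condition is not created by alternative (i) (which only preserves slack) but is present already at the birth of any $b=0$ type-II term from Lemma~\ref{chain-T}, since the $Y$-correction integral there carries $s^k\nabla_x^{k+1}E$; thereafter every differentiation branch either leaves that factor untouched, preserves the slack via (i), strictly improves it via (ii), or introduces a $W$-factor via (iv) making the condition vacuous.
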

Let us call the first form to be type-I and the second one is type-II.
\begin{remark}The conditions on the indices are important for the decay estimates. The first condition means that we do not lose derivatives in the estimates. The second condition requires a good decay for the quantities $s^{t_i}\nabla_x^{u_i}E$. The third condition means that at least $Y_{s,t}$ or $W_{s,t}$ (or their derivatives) shows up in the expression. The forth condition means that $E$ or its derivatives must show up in the expression. Finally, the last condition means that if $W_{s,t}$ (or its derivatives)  does not show up, then we have more gradient of $E$ to control the power of $s$. The reason for the last condition will be clear when we estimate \eqref{clear} later in the paper.
\end{remark} 
\begin{proof}
The lemma is proved by induction on $n$. Assuming the lemma is true for $n$, we justify the above claim for $n+1$.~\\
\textbf{Gradient of type-I terms:}~\\
Applying $\pt_j$ to the term $\mathcal{T}(s^k \nabla_x^k E,f(\mu))$ and using Lemma \ref{chain-T}, we have 
\[\bega
&\frac{1}{t^n}\pt_j T(s^k\nabla_x^kE,f(\mu))=\frac{1}{t^{n+1}}\lw(T(s^{k+1}\nabla_x^k \pt_j E,f(\mu))+T(s^k \nabla_x^k E,\pt_j f(\mu))\rw)\\
&+\frac{1}{t^{n+1}}\int_0^t \int_{\R^d}\lw(\lw(\pt_{v_j}Y_{s,t}(x-tv,v)\rw)\cdot \nabla_x(s^k\nabla_x^kE)\lw(s,X_{s,t}(x,v)\rw)(\nabla_vf(\mu))\lw(V_{s,t}(x,v))\rw)\rw)dvds\\
&+\frac{1}{t^{n+1}}\int_0^t \int_{\R^d}(s^k\nabla_x^k E)\lw(s,X_{s,t}(x,v))\rw)\lw\{\pt_{v_j}W_{s,t}(x-tv,v)\cdot\nabla_v^2 f(\mu)(V_{s,t}(x,v))\rw\}dvds.\enda 
\]
We can see that all of the above terms are either type-I or type-II, and satisfy the induction hypothesis with order $n+1$.~\\
\textbf{Gradient of type-II form:}~\\
Now we consider the type-II term:
\[\bega 
\mathcal D=\frac{1}{t^n}\int_0^t\int_{\R^d}&\lw\{(\nabla_v^{m_1}Y_{s,t})^{k_1}\cdots (\nabla_v^{m_a}Y_{s,t})^{k_a}\rw\} \cdot \lw\{(\nabla_v^{n_1}W_{s,t})^{l_1}\cdots (\nabla_v^{n_b}W_{s,t})^{l_b}\rw\}\\
&\times \lw\{(s^{t_1}\nabla_x^{u_1}E)\cdots (s^{t_c}\nabla_x^{u_c}E)
\rw\}f(\mu)dvds.
\enda 
\]
Here $(\nabla_v^{m_i}Y_{s,t})^{k_i}, (\nabla_v^{n_j}W_{s,t})^{l_j}$ is evaluated at $(x-tv,v)$ and $\nabla_x^{u_r}E$ is evaluated at $(s,X_{s,t}(x,v))$.
Making the change of variables $w=x-tv$, we can rewrite $\mathcal D$ as
\[\bega
t^{-d}\frac{1}{t^n}\int_0^t\int_{\R^d}&\lw\{(\nabla_v^{m_1}Y_{s,t})^{k_1}\cdots (\nabla_v^{m_a}Y_{s,t})^{k_a}\rw\} 
\lw\{(\nabla_v^{n_1}W_{s,t})^{l_1}\cdots (\nabla_v^{n_b}W_{s,t})^{l_b}\rw\}\\
&\times \lw\{(s^{t_1}\nabla_x^{u_1}E)\cdots (s^{t_c}\nabla_x^{u_c}E)
\rw\}f(\mu)dwds.
\enda 
\]
where 
$(\nabla_v^{m_i}Y_{s,t})^{k_i}, (\nabla_v^{n_j}W_{s,t})^{l_j}$ is evaluated at $(w,\frac{x-w}{t})$ and $\nabla_x^{u_r}E$ is evaluated at 
\[\lw
(s,Y_{s,t}(w,\frac{x-w}{t})+w+\frac{s}{t}(x-w)\rw)
\]
Applying $\nabla_x$ to $\mathcal D$ and using the product rules, we have \[\begin{cases}
\mathcal D_1&=t^{-d}\frac{1}{t^n}\int_0^t \int_{\R^d}k_1\lw(\lw(\nabla_v^{m_1}Y_{s,t}
\rw)^{k_1-1}\lw\{\nabla_v^{m_1+1}Y_{s,t}
\rw\}\frac{1}{t}\rw)\cdots (\nabla_v^{m_a}Y_{s,t})^{k_a}\\
&\quad\lw\{(\nabla_v^{n_1}W_{s,t})^{l_1}\cdots (\nabla_v^{n_b}W_{s,t})^{l_b}\rw\}\lw\{(s^{t_1}\nabla_x^{u_1}E)\cdots (s^{t_c}\nabla_x^{u_c}E)
\rw\}f(\mu)dwds,\\
\mathcal D_2&=t^{-d}\frac{1}{t^n}\int_0^t \int_{\R^d}
\lw\{(\nabla_v^{m_1}Y_{s,t})^{k_1}\cdots (\nabla_v^{m_a}Y_{s,t})^{k_a}\rw\} \\
&\quad\lw(l_1\lw\{(\nabla_v^{n_1}W_{s,t})^{l_1-1}(\nabla_v^{n_1+1}W_{s,t})\rw\}\frac{1}{t}\rw)\cdots (\nabla_v^{n_b}W_{s,t})^{l_b}\\
&\quad \lw\{(s^{t_1}\nabla_x^{u_1}E)\cdots (s^{t_c}\nabla_x^{u_c}E)
\rw\}f(\mu)dwds,\\
\mathcal D_3=&t^{-d}\frac{1}{t^n}\int_0^t \int_{\R^d}\lw\{(\nabla_v^{m_1}Y_{s,t})^{k_1}\cdots (\nabla_v^{m_a}Y_{s,t})^{k_a}\rw\} 
\lw\{(\nabla_v^{n_1}W_{s,t})^{l_1}\cdots (\nabla_v^{n_b}W_{s,t})^{l_b}\rw\}\\
&\lw(\frac{1}{t}\lw\{s^{t_1}(\nabla_x^{u_1+1}E)(\nabla_v Y_{s,t})\rw\}+\frac{1}{t}\lw\{s^{t_1+1}\nabla_x^{u_1+1} E\rw\}\rw)\cdots(s^{t_c}\nabla_x^{u_c}E)f(\mu)dwds.
\end{cases}
\]
Now making the change of variables $v=\frac{x-w}{t}$, we get 
\[\begin{cases}
\mathcal D_1&=\frac{1}{t^{n+1}}\int_0^t \int_{\R^d}k_1\lw(\lw(\nabla_v^{m_1}Y_{s,t}
\rw)^{k_1-1}\lw\{\nabla_v^{m_1+1}Y_{s,t}
\rw\}\rw)\cdots (\nabla_v^{m_a}Y_{s,t})^{k_a}\\
&\quad\lw\{(\nabla_v^{n_1}W_{s,t})^{l_1}\cdots (\nabla_v^{n_b}W_{s,t})^{l_b}\rw\}\lw\{(s^{t_1}\nabla_x^{u_1}E)\cdots (s^{t_c}\nabla_x^{u_c}E)
\rw\}f(\mu)dvds,\\
\mathcal D_2&=\frac{1}{t^{n+1}}\int_0^t \int_{\R^d}
\lw\{(\nabla_v^{m_1}Y_{s,t})^{k_1}\cdots (\nabla_v^{m_a}Y_{s,t})^{k_a}\rw\} \\
&\quad\lw(l_1\lw\{(\nabla_v^{n_1}W_{s,t})^{l_1-1}(\nabla_v^{n_1+1}W_{s,t})\rw\}\rw)\cdots (\nabla_v^{n_b}W_{s,t})^{l_b}\\
&\quad \lw\{(s^{t_1}\nabla_x^{u_1}E)\cdots (s^{t_c}\nabla_x^{u_c}E)
\rw\}f(\mu)dvds,\\
\mathcal D_3=&\frac{1}{t^{n+1}}\int_0^t \int_{\R^d}\lw\{(\nabla_v^{m_1}Y_{s,t})^{k_1}\cdots (\nabla_v^{m_a}Y_{s,t})^{k_a}\rw\} 
\lw\{(\nabla_v^{n_1}W_{s,t})^{l_1}\cdots (\nabla_v^{n_b}W_{s,t})^{l_b}\rw\}\\
&\lw\{s^{t_1}(\nabla_x^{u_1+1}E)(\nabla_v Y_{s,t})+s^{t_1+1}\nabla_x^{u_1+1} E\rw\}\cdots(s^{t_c}\nabla_x^{u_c}E)f(\mu)dvds.
\end{cases}
\]
From the above expressions, it is straightforward that the new terms are of type-II, which satisfy the induction hypothesis with $n+1$. The proof is complete.
\end{proof}
\begin{proposition}\label{deri}
Let $\mathcal R$ be defined as in \eqref{bi}. There holds, for $n\ge 2$, the decaying estimates 
\[
\la t\ra^n\|\nabla_x^n \mathcal R (t)\|_{L^1}+\la t\ra^{d+n}\|\nabla_x^k\mathcal R (t)\|_{L^\infty}\lesssim \eps^2.
\]

\end{proposition}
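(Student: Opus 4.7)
The plan is to apply Lemma \ref{form} to decompose $\nabla_x^n \mathcal R$ into a sum of type-I terms $\frac{1}{t^n}\mathcal T(s^k\nabla_x^k E, f(\mu))$ with $k\le n$, and type-II integral terms, and then estimate each type separately.

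For a type-I term, I observe that $\mathcal E(s,x) := s^k\nabla_x^k E(s,x)$ inherits the bounds required in Proposition \ref{assume}: by \eqref{E-decay}, $\|\nabla_x^k E(s)\|_{L^\infty}\lesssim \eps\la s\ra^{-(d+k)}\log(1+s)$, so $\|\mathcal E(s)\|_{L^\infty}\lesssim \eps\la s\ra^{-d}\log(1+s)$, and similarly $\|\mathcal E(s)\|_{L^1}\lesssim \eps\log(1+s)$ from the linear density bound. Since $f(\mu)$ is a polynomial in derivatives of $\mu$, it still satisfies \eqref{bound}. Hence the proof of Proposition \ref{assume} applies verbatim (the only place $E$ and $\mu$ enter is through these norms), giving
\[
\|\mathcal T(s^k\nabla_x^k E, f(\mu))\|_{L^1}+\la t\ra^d\|\mathcal T(s^k\nabla_x^k E, f(\mu))\|_{L^\infty}\lesssim \eps^2,
\]
and dividing by $t^n$ yields the required estimate for this piece.

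For a type-II term $\mathcal D = t^{-n}\int_0^t\int_{\R^d} F(s,x,v)\,dv\,ds$, I bound $F$ pointwise. All $Y$- and $W$-factors are taken in $L^\infty$ via \eqref{Y-decay} and \eqref{W-decay}; each $E$-factor contributes $\|\nabla_x^{u_r}E(s)\|_{L^\infty}\lesssim \eps\la s\ra^{-(d+u_r)}\log(1+s)$, with the $s^{t_r}$ weight absorbed by $s^{t_r}\le \la s\ra^{u_r}$ from the E-decay condition $t_r\le u_r$. The $v$-integral converges absolutely thanks to the $\la v\ra^{-M}$ decay of $f(\mu)$ composed with $V_{s,t}$ (which differs from $v$ by $O(\eps)$). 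Altogether
\[
\|\mathcal D\|_{L^\infty_x}\lesssim t^{-n}\int_0^t (1+s)^{-\sigma}\log^\gamma(1+s)\,ds,\qquad \sigma := (d-2){\sum}_i k_i + (d-1){\sum}_j l_j + {\sum}_r (d+u_r-t_r).
\]
For the $L^1$ bound I use that the map $x\mapsto X_{s,t}(x,v)$ has Jacobian $1+O(\eps)$ by Lemma \ref{D-Yst}, so that one $E$-factor with $u_r\ge 1$ may be placed in $L^1_x$, producing the bound $\|\nabla_x^{u_r}E(s)\|_{L^1}\lesssim \eps\la s\ra^{-u_r}\log(1+s)$ and saving a factor $\la s\ra^d$ compared to the $L^\infty$ estimate.

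The main obstacle will be to verify that $\sigma\ge d+1$ (up to a fractional power absorbed from the $\log$), so that $\int_0^t(1+s)^{-\sigma}\log^\gamma(1+s)\,ds\lesssim \la t\ra^{-d}$ and $\|\mathcal D\|_{L^\infty}\lesssim \la t\ra^{-(d+n)}$. The Y-W show-up and E-show-up conditions supply two $\eps$-factors, producing the target $\eps^2$. When $b\ge 1$, at least one $W$-factor contributes $(1+s)^{-(d-1)}$, so together with the $\ge d$ from the E-factor one has $\sigma\ge 2d-1\ge d+1$ for $d\ge 3$. In the delicate case $b=0$, the absent $W$-decay is exactly compensated by the $W$-$E$ decay condition $t_h+1\le u_h$, upgrading the $h$-th $E$-factor to $(1+s)^{-(d+1)}$ and again yielding $\sigma\ge 2d-1\ge d+1$ for $d\ge 3$. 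This is precisely where the dimensional assumption $d\ge 3$ enters, and it closes the estimate in both the $L^1$ and $L^\infty$ cases.
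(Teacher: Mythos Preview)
Your handling of the type-I terms and your $L^1$ estimate for type-II terms are essentially the paper's approach and are fine. The gap is in your $L^\infty$ estimate for type-II terms. The implication ``$\sigma\ge d+1$, so $\int_0^t(1+s)^{-\sigma}\log^\gamma(1+s)\,ds\lesssim \la t\ra^{-d}$'' is simply false: for any $\sigma>1$ that integral converges to a finite positive constant as $t\to\infty$ and carries no decay in $t$. Consequently your bound gives only $\|\mathcal D\|_{L^\infty}\lesssim \eps^2 t^{-n}$, short of the required $t^{-(n+d)}$. The missing $t^{-d}$ is dispersive decay and cannot be squeezed out of the $s$-integral; it has to come from the $v$-integration, which in your argument you merely bounded by a constant via the $\la v\ra^{-M}$ decay of $f(\mu)$.

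The paper recovers it as follows. Split $\int_0^t=\int_{t/2}^t+\int_0^{t/2}$. On $[t/2,t]$ one uses $(1+s)^{-(2d-2)}\lesssim t^{-d}(1+s)^{-(d-2)}$ and the boundedness of $\int_{\R^d}|f(\mu)|(V_{s,t}(x,v))\,dv$ to produce $t^{-(n+d)}$ directly. On $[0,t/2]$ one first straightens the characteristic via Lemma~\ref{straighten}, substituting $v\mapsto\Psi_{s,t}(x,v)$ so that $X_{s,t}(x,\Psi_{s,t}(x,v))=x-(t-s)v$, and then sets $w=x-(t-s)v$; the Jacobian yields the factor $(t-s)^{-d}\lesssim t^{-d}$, and one $E$-factor is placed in $L^1_w$ (contributing $(1+s)^{-(u_i-t_i)}$ instead of $(1+s)^{-(d+u_i-t_i)}$). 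Only \emph{after} this change of variables does the case analysis $b=0$ versus $b\ge 1$, together with the $W$--$E$ decay condition, enter---its role is to ensure the remaining $s$-integral over $[0,t/2]$ is bounded uniformly in $t$, not to manufacture the $t^{-d}$.
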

\begin{proof}
By Lemma \ref{form}, $\nabla_x^n\mathcal R (t,x)$ can be decomposed as a sum of many terms, all are either of the form $\mathcal R_1$ or $\mathcal R_2$, where
\[\begin{cases}
\mathcal R _1&=\frac{1}{t^n}\mathcal{T}(s^k\nabla_x^kE,f(\mu))\\
\mathcal R _2&=\frac{1}{t^n}\int_0^t\int_{\R^d}\lw\{(\nabla_v^{m_1}Y_{s,t})^{k_1}\cdots (\nabla_v^{m_a}Y_{s,t})^{k_a}\rw\} \cdot \lw\{(\nabla_v^{n_1}W_{s,t})^{l_1}\cdots (\nabla_v^{n_b}W_{s,t})^{l_b}\rw\}\\
&\lw\{(s^{t_1}\nabla_x^{u_1}E)\cdots (s^{t_c}\nabla_x^{u_c}E)
\rw\}f(\mu)dvds
\end{cases}
\]
where 
\[
k\le n\qquad \text{and}\qquad \max\{\{m_1,k_1,\cdots, m_a,k_a\}\cup\{n_1,l_1,\cdots, n_b,l_b\}\cup\{t_1,u_1,\cdots, t_c,u_c\}\}\le n.
\]
\textbf{Bounding} $\mathcal R_1(t)$:~\\
We have 
\[
\mathcal R_1(t)=\frac{1}{t^n}\mathcal T(s^k \nabla_x^k E,f(\mu))
\]
with $k\le n$ and $f(\mu)$ is a decay function in $v$ which only depends on $\mu$ or its derivatives.
Applying Proposition \ref{assume}, we only need to check the assumption
\[
\|s^k\nabla_x^k E(s)\|_{L^\infty}\lesssim  \frac{\eps \log(1+s)}{(1+s)^{d}},
\]
which is true, since $\|\nabla_x^k E(s)\|_{L^\infty}\lesssim \frac{\eps\log(1+s)}{(1+s)^{d+k}}$. The proof for $\mathcal R_1$ is complete.~\\
\textbf{Bounding} $\mathcal R_2(t)$:~\\
Now we bound $\mathcal R_2(t)$, where 
\[\bega
\mathcal R _2(t,x)&=\frac{1}{t^n}\int_0^t\int_{\R^d}\lw\{(\nabla_v^{m_1}Y_{s,t})^{k_1}\cdots (\nabla_v^{m_a}Y_{s,t})^{k_a}\rw\} \cdot \lw\{(\nabla_v^{n_1}W_{s,t})^{l_1}\cdots (\nabla_v^{n_b}W_{s,t})^{l_b}\rw\}\\
&\lw\{(s^{t_1}\nabla_x^{u_1}E)\cdots (s^{t_c}\nabla_x^{u_c}E)
\rw\}f(\mu)dvds
\enda\]
Here $(\nabla_v^{m_i}Y_{s,t})^{k_i}, (\nabla_v^{n_j}W_{s,t})^{l_j}$ is evaluated at $(x-tv,v)$ and $\nabla_x^{u_r}E$ is evaluated at $(s,X_{s,t}(x,v))$.~\\
We also recall the following set of conditions, proved in Lemma \ref{form}:
\beq\label{set-of-con}
\begin{cases}
&k\le n\quad\text{and}\quad  \max\{\{m_1,k_1,\cdots, m_a,k_a\}\cup\{n_1,l_1,\cdots, n_b,l_b\}\cup\{t_1,u_1,\cdots, t_c,u_c\}\}\le n,\\
&(t_1,t_2,\cdots, t_c)\le (u_1,u_2,\cdots, u_c),\\
&\min \{a,b\} \ge 1,\\
& c\ge 1\\
&\text{If}\quad b=0\quad\text{then}\quad t_h+1\le u_h\quad\text{for some}\quad 1\le h \le c.
\end{cases}
\eeq
First, we will show that $\|\mathcal R_2(t)\|_{L^1}\lesssim \eps^2\la t\ra^{-n}$. ~\\
Using  the third condition in \eqref{set-of-con} and the fact that 
\[
\max_{0\le k\le n}\lw(\|\nabla_v^k Y_{s,t}\|_{L^\infty}+\|\nabla_v^k W_{s,t}\|_{L^\infty}\rw)\lesssim \eps \frac{\log(1+s)}{(1+s)^{d-2}},
\]
we get
\beq\label{R_2-now}
\mathcal R_2(t,x)\lesssim t^{-n}\int_0^t \int_{\R^d} \eps \frac{\log(1+s)}{(1+s)^{d-2}} \prod_{i=1}^c \lw(s^{t_i}\|\nabla_x^{u_i}E(s)\|_{L^\infty}\rw)|f(\mu)|(V_{s,t}(x,v))dvds.
\eeq 
Now using the second and the forth condition in \eqref{set-of-con}, and the decay of $\nabla_x^{u_i}E$, we get 
 \[
 s^{t_i}\|\nabla_x^{u_i}E(s)\|_{L^\infty}\lesssim \frac{\eps\log(1+s)}{(1+s)^{d+u_i-t_i}}\lesssim \frac{\eps\log(1+s)}{(1+s)^d}.
 \]
Applying the above inequality to \eqref{R_2-now}, we obtain
\[\bega 
\mathcal R_2(t,x)&\lesssim t^{-n}\int_0^t \int_{\R^d}\eps^2 \cdot \frac{\log(1+s)}{(1+s)^{d-2}}\cdot\frac{\log(1+s)}{(1+s)^d}|f(\mu)|(V_{s,t}(x,v))dvds\\
&\lesssim \eps^2 t^{-n}\int_0^t \int_{\R^d}\frac{\log(1+s)^2}{(1+s)^{2d-2}}|f(\mu)|(V_{s,t}(x,v))dvds.
\enda 
\]
Hence, using the decaying assumption of $\mu$, we obtain
\[
\|\mathcal R_2(t)\|_{L^1}\lesssim \eps^2 t^{-n}\int_0^t \frac{\log(1+s)^2}{(1+s)^{2d-2}}\lw(\int_{\R^d\times \R^d}|f(\mu)|(V_{s,t}(x,v))dxdv\rw)ds\lesssim \eps^2 t^{-n}.
\]
The decaying bound for $\|\mathcal R_2(t)\|_{L^1}$ is complete. \\
We split the integral in $\mathcal R_2$ into $\int_0^{t/2}+\int_{t/2}^t$, so that $\mathcal R_2=\mathcal R_3+\mathcal R_4$, where 
\[
\begin{cases}
\mathcal R_3(t,x)&=\frac{1}{t^n}\int_{t/2}^t\int_{\R^d}\lw\{(\nabla_v^{m_1}Y_{s,t})^{k_1}\cdots (\nabla_v^{m_a}Y_{s,t})^{k_a}\rw\} \cdot \lw\{(\nabla_v^{n_1}W_{s,t})^{l_1}\cdots (\nabla_v^{n_b}W_{s,t})^{l_b}\rw\}\\
&\lw\{(s^{t_1}\nabla_x^{u_1}E)\cdots (s^{t_c}\nabla_x^{u_c}E)
\rw\}f(\mu)dvds\\
\mathcal R_4(t,x)&=\frac{1}{t^n}\int_0^{t/2}\int_{\R^d}\lw\{(\nabla_v^{m_1}Y_{s,t})^{k_1}\cdots (\nabla_v^{m_a}Y_{s,t})^{k_a}\rw\} \cdot \lw\{(\nabla_v^{n_1}W_{s,t})^{l_1}\cdots (\nabla_v^{n_b}W_{s,t})^{l_b}\rw\}\\
&\lw\{(s^{t_1}\nabla_x^{u_1}E)\cdots (s^{t_c}\nabla_x^{u_c}E)
\rw\}f(\mu)dvds
\end{cases}
\]
For $\mathcal R_3(t,x)$, by the same argument for $\mathcal R_1(t,x)$, we have the pointwise bound 
\[\bega 
\mathcal R_3(t,x)&\lesssim \eps^2 t^{-n}\int_{t/2}^t\frac{\log(1+s)^2}{(1+s)^{2d-2}}\int_{\R^d}|f(\mu)|(V_{s,t}(x,v))dvds\\
&\lesssim \eps^2 t^{-n-d}\int_{t/2}^t \frac{\log(1+s)^2}{(1+s)^{d-2}}ds\lesssim \eps^2 t^{-(n+d)}
\enda 
\]
Thus $\|\mathcal R_3(t)\|_{L^\infty}\lesssim\eps^2 t^{-(n+d)}$.\\
Now to bound $\|\mathcal R_4(t)\|_{L^\infty}$, we use the inequalities 
\[
\max_{0\le k\le n}\lw(\la s\ra^{d-2}\|\nabla_v^k Y_{s,t}\|_{L^\infty}+\la s\ra^{d-1}\|\nabla_v^k W_{s,t}\|_{L^\infty}\rw)\lesssim \eps\log(1+s)
\]
to get
\[\bega
\mathcal R_4(t,x)&\lesssim t^{-n}\int_0^{t/2} \int_{\R^d} \lw(\frac{\eps\log(1+s)}{(1+s)^{d-2}}\rw)^a\lw(\frac{\eps\log(1+s)}{(1+s)^{d-1}}
\rw)^b\\
&\quad \times \prod_{i=1}^c \lw(s^{t_i}|\nabla_x^{u_i}E|(s,X_{s,t}(x,v))\rw)\cdot|f(\mu)|(V_{s,t}(x,v))dvds\\
 &\lesssim t^{-n}\eps^{a+b}\int_0^{t/2}\int_{\R^d}\frac{(\log(1+s))^{a+b}}{(1+s)^{a(d-2)+b(d-1)}} \\
 &\quad \times \prod_{i=1}^c\lw\{s^{t_i}|\nabla_x^{u_i}E|(s,X_{s,t}(x,v))\rw\}\cdot|f(\mu)|(V_{s,t}(x,v))dvds
 \enda 
\]
Using the change of variable $v\to \Psi_{s,t}(x,v)$ so that $X_{s,t}(x,\Psi_{s,t}(x,v))=x-(t-s)v$, we have 
\[\bega
\mathcal R_4(t,x)&\lesssim   t^{-n}\eps^{a+b}\int_0^{t/2}\int_{\R^d}\frac{(\log(1+s))^{a+b}}{(1+s)^{a(d-2)+b(d-1)}} \prod_{i=1}^c\lw\{s^{t_i}|\nabla_x^{u_i}E|(s,x-(t-s)v)\rw\}\\
&\cdot  |f(\mu)|(V_{s,t}(x,\Psi_{s,t}(x,v))) |\det(\nabla_v \Psi_{s,t}(x,v))|dvds
\enda 
\]
Now making the change of variables $w=x-(t-s)v$, we have 
\[\bega
\mathcal R_4(t,x)&\lesssim t^{-n}\eps^{a+b}\int_0^{t/2}(t-s)^{-d}\int_{\R^d} \frac{(\log(1+s))^{a+b}}{(1+s)^{a(d-2)+b(d-1)}}\prod_{i=1}^c\lw\{s^{t_i}|\nabla_x^{u_i}E|(s,w)\rw\}\\
&\cdot|f(\mu)|\lw(V_{s,t}(x,\Psi_{s,t}(x,\frac{x-w}{t})\rw)dwds\\
&\lesssim t^{-(n+d)}\eps^{a+b}\int_0^{t/2}\frac{\log(1+s)^{a+b}}{(1+s)^{a(d-2)+b(d-1)}}\cdot \min_{1\le i\le c} \lw\{s^{t_i}\|\nabla_x^{u_i}E(s)\|_{L^1}
\rw\}ds\\
&\lesssim t^{-(n+d)}\eps^{a+b}\int_0^{t/2}\frac{\log(1+s)^{a+b}}{(1+s)^{a(d-2)+b(d-1)}}\cdot\min_{1\le i\le c}\lw\{s^{t_i}\frac{\eps\log(1+s)}{(1+s)^{u_i}}\rw\}ds\\
&\lesssim t^{-(n+d)}\eps^{a+b+1}\int_0^{t/2}\frac{(\log(1+s))^{a+b+1}}{(1+s)^{\{a(d-2)+b(d-1)+\max_{1\le i\le c}(u_i-t_i)\}}}ds.
\enda
\]
Now using the third condition in \eqref{set-of-con}, we get $\eps^{a+b+1}\lesssim \eps^2$ as long as $\eps$ is small.
Hence we get 
\beq\label{clear}
\|\mathcal R_4(t)\|_{L^\infty}\lesssim \eps^2 t^{-(n+d)}\int_0^{t/2}\frac{(\log(1+s))^{a+b+1}}{(1+s)^{\{a(d-2)+b(d-1)+\max_{1\le i\le c}(u_i-t_i)\}}}ds.
\eeq
Thus, it suffices to prove that 
\[
\mathcal C=\int_0^{t/2}\frac{(\log(1+s))^{a+b+1}}{(1+s)^{\{a(d-2)+b(d-1)+\max_{1\le i\le c}(u_i-t_i)\}}}ds\lesssim 1.
\]
We consider two cases:~\\
\textit{Case 1}: $b=0$.~\\
In this case, we use use the last condition listed in \eqref{set-of-con} to get
$\max_{1\le i\le c}(u_i-t_i)\ge 1$, and hence
\[
\mathcal C\lesssim \int_0^{t/2}\frac{(\log(1+s))^{a+1}}{(1+s)^{a(d-2)+1}}\lesssim \int_0^{t/2}\frac{(\log(1+s))^{a+1}}{(1+s)^{a(d-2)+1}}ds\lesssim 1.
\]
\textit{Case 2}: $b\ge 1$.~\\
In this case, we estimate $\mathcal C$ as follows:
\[
\mathcal C\lesssim \int_0^{t/2}\frac{(\log(1+s))^a}{(1+s)^{a(d-2)}}\cdot\frac{(\log(1+s))^{b+1}}{(1+s)^{b(d-1)}}ds\lesssim \int_0^{t/2}\frac{(\log(1+s))^{b+1}}{(1+s)^{b(d-1)}}ds\lesssim 1.
\]
The proof is complete.
\end{proof}

Finally, we give the proof for the main Theorem \ref{main-thm}.~\\
\textit{Proof of Theorem} \ref{main-thm}.
Let 
\[
\mathcal N(t)=\sup_{0\le s\le t}\max_{0\le k\le N}\frac{\lw(
\la s\ra^k\|\nabla_x^k\rho(s)\|_{L^1}+\la s\ra^{k+d}\|\nabla_x^k \rho(s)\|_{L^\infty}
\rw)}{\log(1+s)}.
\]
Now we fix a constant $M_0>0$, which will be chosen later. We prove that $\mathcal N(t)\le M_0 \eps_0$ for all $t\ge 0$, as long as $\eps_0$ is small enough. 
Let 
\beq\label{def-T}
T_\star=\sup\{T>0:\quad \mathcal N(t)\le M_0 \eps_0\quad \text{for all}\quad 0\le t\le T
\}
\eeq
We shall prove that $T_\star=\infty$ by contradiction argument. Assuming that $T_\star<\infty$, we have 
\[
\mathcal N (T_\star)=\eps_0\quad \text{and}\quad \sup_{0\le t\le T}\mathcal N(t)<\eps_0\quad \text{for all}\quad T<T_\star.
\]
For $t\in (0,T_\star)$, by Theorem \ref{decay-rho} and \ref{thm1}, there exists $C_0,C_1>0$ such that 
\[
\mathcal N(t)\le C_0 \|S\|_{Y_t^N}\le C_0 C_1(\eps_0+\eps_0^2)\]
for $\eps_0$ small enough. 
Let $t\to T_\star$, we have 
\[
M_0\eps_0\le C_0C_1(\eps_0+\eps_0^2)
\]
which is false, as long as $M_0>2C_0C_1$ and $\eps_0$ small.
The proof is complete.

\def\cprime{$'$} \def\cprime{$'$}

\end{document}